\newtheorem{theorem}{Theorem}
\newtheorem{remark}{Remark}
\newtheorem{proposition}{Proposition}
\newtheorem{corollary*}{Corollary}
\newtheorem{definition}{Definition}
\newcommand{\A}{B}
\newcommand{\Aas}{A(s)}
\newcommand{\al}{\alpha}
\newcommand{\as}{A(s)}
\newcommand{\asminus}{\left(A(s)-
\mathbbm{1}/2\right)^{-1}}
\newcommand{\asa}{\left(A(s)-
\mathbbm{1}/2\right)}
\newcommand{\bas}{b(s)}
\newcommand{\be}{\begin{equation}}
\newcommand{\ee}{\end{equation}}
\newcommand{\ben}{\begin{equation*}}
\newcommand{\bs}{b(s)}
\newcommand{\een}{\end{equation*}}
\newcommand{\cas}{c_{s}}
\newcommand{\co}{{\rm co}}
\newcommand{\coA}{{\rm co}_B}
\newcommand{\coanew}{{\rm cos}_b}
\newcommand{\complexnumbers}{\mathbb{C}}
\newcommand{\cvw}{c[v,w]}
\newcommand{\dotcas}{{\dot{c}_{s}}}
\newcommand{\grad}{{\rm grad}}
\newcommand{\halb}{\frac{1}{2}}
\newcommand{\Identity}{\mathbbm{1}}
\newcommand{\mnr}{M(n;\mathbb{R})}
\newcommand{\mtwor}{M(2;\mathbb{R})}
\newcommand{\po}{\mathcal{P}}
\newcommand{\poN}{\mathcal{P}_N}
\newcommand{\R}{\mathbb{R}}
\newcommand{\rn}{\mathbb{R}^n}
\newcommand{\siA}{{\rm si}_B}
\newcommand{\si}{{\rm si}}
\newcommand{\sianew}{{\rm sin}_b}
\newcommand{\tcas}{\widetilde{c}_{s}}
\newcommand{\T}{\mathcal{T}}
\newcommand{\Ta}{T}
\newcommand{\viertel}{\frac{1}{4}}
\newcommand{\weg}[1]{}
\newcommand{\z}{\mathbb{Z}}
\title{Solitons of discrete curve shortening}
\author{Christine Rademacher}
\address{Technische Hochschule N\"urnberg Georg Simon Ohm,
Fakult\"at Angewandte Mathematik, Physik und Allgemeinwissenschaften,
Postfach 210320, 90121 N\"urnberg, Germany}
\email{christine.rademacher@th-nuernberg.de}
\author{Hans-Bert Rademacher}
\address{Universit\"at Leipzig, Mathematisches Institut, 04081 Leipzig, Germany}
\email{rademacher@math.uni-leipzig.de}
\date{2015-08-27, revised: 2016-06-16}
\thanks{We are very grateful for the comments and suggestions
of the referees}
\begin{document}

\begin{abstract}
For a polygon $x=(x_j)_{j\in \z}$ in $\R^n$ we consider
the polygon  $(T(x))_j=\left\{x_{j-1}+2x_j+x_{j+1}\right\}/4\,.$
This transformation is
 obtained by applying the midpoints
polygon construction twice. 
For a closed polygon or a polygon with finite vertices 
this is a curve shortening process.
We call a polygon $x$ a {\em soliton} of the 
transformation $T$ 
if the polygon $T(x)$ is an affine image of $x.$ 
We describe a large class of solitons for the transformation $T$ 
by considering smooth curves
$c$ which are solutions of the differential
equation $\ddot{c}(t)=Bc(t)+d$ for a
real matrix
$B$ and a vector
$d.$ 
The solutions of this 
differential equation can be written in terms of 
power series in the matrix $B.$
For a solution $c$
and for any $s>0,a\in \R$
the  polygon $x(a,s)=(x_j(a,s)_j)_{j\in \z};x_j(a,s)=c(a+sj)$
is a soliton of $T.$
For example we obtain solitons lying on
spiral curves which under the
transformation $T$ rotate and shrink.
\end{abstract}
\keywords{discrete curve shortening, polygon, 
affine mappings, soliton, midpoints polygon, 
linear system of ordinary differential equations}
\subjclass[2010]{53C44 (34A05 34A30)}

\maketitle

\section{Introduction}

For an
infinite polygon $(x_j)_{j\in \z}$ given by the vertices 
$x_j$  in the vector space $\R^n$ 
the {\em midpoints
polygon} is defined by $M(x)$ where $M(x)_j:=(x_j+x_{j+1})/2; j\in \z.$ 
If the polygon is closed or rather periodic, i.e. if for some $N\,:$
$x_{j+N}=x_j$ for all $j\in \z$ then this midpoint mapping $M$
defines a {\em curve shortening process,}
i.e. the polygon $M(x)$ is shorter 
than the polygon $x$ unless it is a single point.
If we iterate the process for a closed polygon $(x_j)_{j=1,\ldots,N}$ 
it converges to the barycenter
$(x_1+\ldots+x_N)/N.$
This elementary construction was already used by Darboux in 1878. 
He also showed in~\cite{Darboux1878} that 
in the plane in the general case the sequence $M^k(x)/\cos(\pi/N)^k$
converges for $k \to \infty$  to an ellipse. 
These results 
for polygons in Euclidean space 
were later rediscovered and extended by several authors,
for example 
Kasner~\cite{Kasner1903},
Schoeneberg~\cite{Schoenberg1950}, and
Berlekamp et al.~\cite{Berlekamp1965}.
Bruckstein \& Shaked~\cite{Bruckstein1997} discuss
the relation with iterative 
smoothing procedures in shape analysis and recognition.
Nowadays applications of 
discrete curve shortening are  discussed in several papers.
For example Smith et al.~\cite{Smith2007} 
present its connection with the 
rendezvous problem for mobile autonomous 
robots.

We modify the curve shortening process $M$
for an infinite polygon (i.e. not necessarily closed)
in Euclidean space as follows:
Instead of the midpoint mapping $M$ we apply the midpoint 
mapping twice
and use an index shift, i.e. we define the polygon 
$T(x)=(T(x)_j)_{j\in \z}$ by the Equation
\begin{equation}
T(x)_j=\frac{1}{4}\left\{x_{j-1}+2x_j+x_{j+1}\right\}\,,
\end{equation}
i.e. $T(x)_j=\left(M^2(x)\right)_{j-1}.$ 
Introducing the index shift has the following 
advantage: 
Since
\begin{equation}
\label{eq:semi1}
T(x)_j -x_j =\frac{1}{4}\left\{x_{j-1}-2x_j+x_{j+1}\right\}
\end{equation}
we can view this
process $T$ as a discrete approximation of the 
{\em semidiscrete flow} 
$s\mapsto x_j(s)$ defined by
\begin{equation}
\label{eq:semi2}
\frac{d x_j(s)}{ds}=
x_{j-1}(s)-2 x_j(s)+x_{j+1}(s)\,,\,x_j(0)=x_j
\end{equation}
on the space of polygons. 
On the space of closed
polygons this flow is the negative gradient flow of 
the functional $F_2,$ 
cf. Section~\ref{sec:semidiscrete}.
Semidiscrete flows are 
discussed for example by 
Chow\& Glickenstein~\cite{Chow2007}.

The polygon $T(x)$ is formed by the midpoints $T(x)_j$ of the 
medians through $x_j$ of the triangle formed by
$x_{j-1},x_j,x_{j+1}.$
Linear
polygons $x_j=j u  +v\,;\, u,v \in \R^n$ are the fixed points of $T.$

Since the process $T$ is affinely invariant it is natural  to consider
polygons which are mapped under $T$ onto an affine image of themselves.
We also use the
term {\em soliton} for these polygons
in analogy to the case of smooth curves
or manifolds 
which are mapped under the mean curvature flow
onto the same curve 
or rather
manifold up to 
an isometry,
cf.
Hungerb{\"u}hler \& Smoczyk~\cite{Hungerbuehler2000},
Hungerb{\"u}hler \& Roost~\cite{Hungerbuehler2011}
and 
Altschuler~\cite{Altschuler2013}.
We call a polygon $x=\left(x_j\right)_j$ a {\em soliton}
of the process $T$ if there exists an affine
mapping $x\in \R^n\mapsto Ax+b \in \R^n$ for 
a matrix $A$ and a vector $b$ such that
\begin{equation}
\label{eq:polysoliton}
T(x)_j
=\frac{1}{4}\left\{x_{j-1}+2x_j+x_{j+1}\right\}
= A x_j+b
\end{equation}
for all $j \in \z.$
The main idea of this paper is to consider not only polygons, 
but also smooth curves $c$ which are affinely invariant under
an analogous process on curves:
We adapt
the process $T$ to curves
in the following sense:\\
We associate the one-parameter family $c_s,
s\in \R$
to the smooth curve $c:$
\begin{equation}
\label{eq:oneparameter}
c_s(t):=
\frac{1}{4}\left\{c(t-s)+2 c(t)+c(t+s)\right\}\,.
\end{equation}
For $s>0,a\in \R$ we define the polygon
$x(a,s)=(x_j(a,s)_j)_{j\in \z};x_j(a,s)=c(a+sj).$
Then the smooth curve $c$ defines solitons 
of the form
$x(a,s)$ for the curve shortening
process $T$ if the following holds:
For some $\epsilon >0$ and any $s\in (0,\epsilon)$ 
there is a one-parameter family of affine maps
$x \in \R^n\mapsto \as x +\bs \in \R^n$
such that for any $s\in (0,\epsilon)$
and all $t \in \R:$
{%
\renewcommand{\theequation}{$*$}%
  \addtocounter{equation}{-1}%
\begin{equation}
\label{eq:hauptgleichung}
c_s(t)=
\frac{1}{4}\left\{c(t-s)+2 c(t)+c(t+s)\right\}
=\as c(t)+\bs
\,.
\end{equation}
}
\begin{figure}[t]
  \centering
    \begin{minipage}[b]{6.2 cm}
    \includegraphics[width=0.9\textwidth]{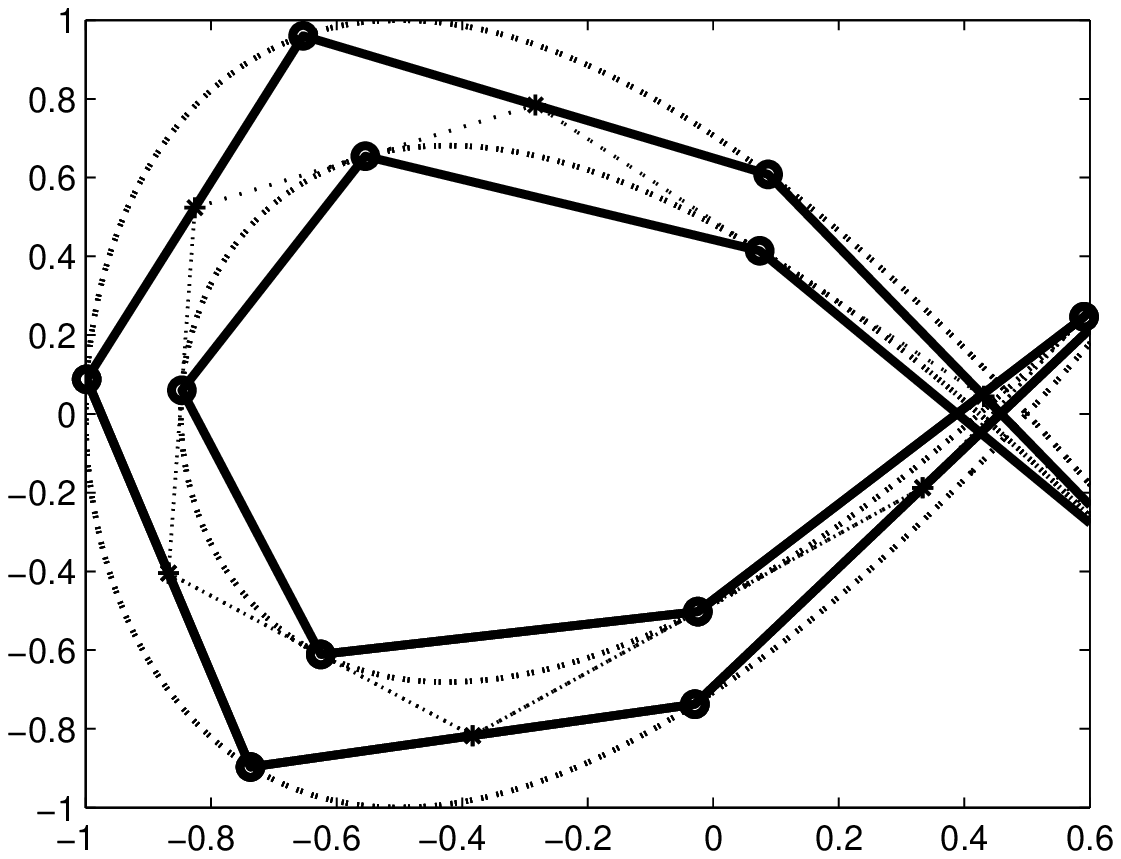} 
    \caption{polygon as soliton of $T$}
    \label{fig:polygon1A}
  \end{minipage}
  \begin{minipage}[b]{6.2 cm}
      \includegraphics[width=0.95\textwidth]{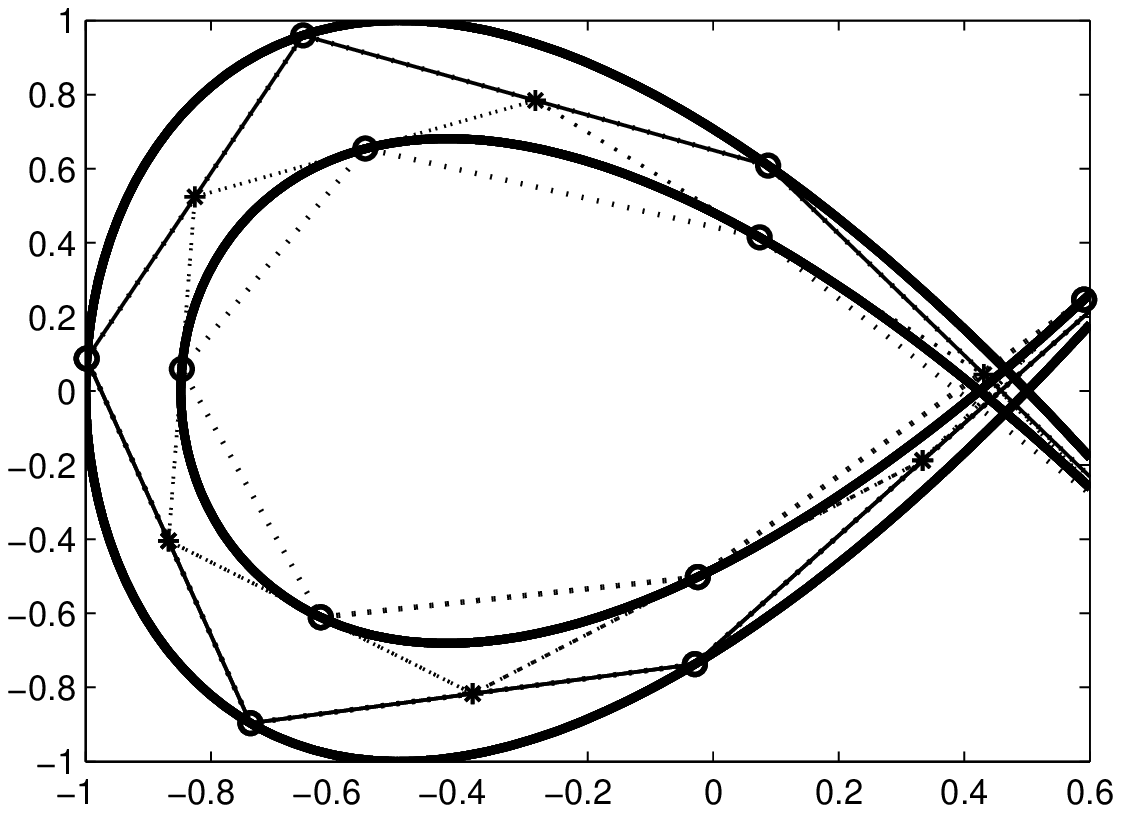} 
      \caption{smooth curve as soliton of $T$}
      \label{fig:polygon2}
    \end{minipage} 
    \end{figure}
In Figure~\ref{fig:polygon1A} we show a polygon
$x=(x_j)$ which is a soliton of the process $T.$
Its vertices $x_j=c\left(0.4 \cdot j\right), j\in \z$
lie on the smooth curve
$c(t)=\left(\cos(2t),\cos(3t)\right),$ which 
is also a soliton of the process $T,$
see Figure~\ref{fig:polygon2}.
Denote by $A$ the diagonal matrix with
entries $(1+\cos(0.8))/2$ and $(1+\cos(1.2))/2.$
Then in Figure~\ref{fig:polygon1A} the polyon
$x$ and its image $T(x)$ are shown, which satisfy
Equation~\eqref{eq:polysoliton}. 
In Figure~\ref{fig:polygon2} the smooth curves
$c$ and $c_{0.4}$ are shown, which satisfy
Equation~\eqref{eq:hauptgleichung}
for $s=0.4, A(0.4)=A,b(0.4)=0.$
Hence the process $T$ in this case corresponds to 
a {\em scaling.} This example belongs to Case (1a)
in Section~\ref{sec:planar}. 
Here $A(0.4)$ is given by Equation~\eqref{eq:A(s)}
where $B$ is the diagonal matrix with entries
$b_1=-4$ and $b_2=-9.$

\medskip

It is the main result of this paper that 
solutions of Equation~\eqref{eq:hauptgleichung}, i.e.
smooth curves $c$
defining solitons for the curve shortening process $T,$ 
can be characterized as solutions of
the inhomogeneous linear differential equation 
\begin{equation}
\label{eq:DGL} \ddot{c}= \A c(t)+d
\end{equation}
of second order with constant coefficients with
$B=2\;A''(0),d=2\; b''(0),$ cf. Theorem~\ref{thm:characterization}
and Theorem~\ref{thm:main}. 
The solitons $c$ and the
maps $\as$ 
or rather
$\bs$ can be described in terms 
of the power series
\be
\coA(t):=\sum_{k=0}^{\infty}\frac{t^{2k}}{(2k)!}B^k
\enspace,
\enspace
\siA(t):=\sum _{k=0}^{\infty}\frac{t^{2k+1}}{(2k+1)!}B^k\,,
\ee
cf. Proposition~\ref{pro:ode} and
Proposition~\ref{pro:odeb} in Section~\ref{sec:linear} .\\
We will show, that for any real matrix $B$  solutions of Equation~\eqref{eq:DGL}
are solitons of the curve shortening process
and the  matrix $\as$ of 
Equation~\eqref{eq:hauptgleichung} is given by
\begin{equation}
\label{eq:A(s)}
\as=
\frac{1}{2}\left\{\Identity +\coA(s)
\right\}
\, .
\end{equation}
The vectors $\bs$ depend on the structure 
of the matrix $B$ and vanish for closed
solitons.\\
The one-parameter family $s \mapsto c_s(t)$ 
defined by Equation~\eqref{eq:oneparameter} associated
to a soliton $c$ can be used to
define an affinely invariant solution of the 
{\em wave equation,} cf. Remark~\ref{rem:wave}.
The relation of polygonal curve shortening and the
curve shortening flow for smooth curves is discussed
in many papers, cf.~\cite[Sec.3]{Sapiro1995},
a reference for curve shortening flows is the book~\cite{Chou2001}.
Self-similar solutions 
of the Euclidean curve shortening flow
in the plane,
i.e. the mean curvature flow for 
curves in Euclidean space,
are discussed by 
Halldorsson~\cite{Halldorsson2012}, see also 
Hungerb{\"u}hler \& Smoczyk~\cite{Hungerbuehler2000} and
Altschuler~\cite{Altschuler2013}.
These questions lead to systems of non-linear ordinary differential
equations.

In Section~\ref{sec:semidiscrete} we show that the
solitons of the curve shortening $T$ are also solitons of the
{\em semidiscrete flow} 
defined in Equation~\eqref{eq:semi2}.
In Section~\ref{sec:planar} we discuss the planar case $n=2$
in detail and present examples.
For example the {\em parabola} is a soliton for which
the curve shortening leads to a {\em translation.}
We also obtain 
various spirals as solitons which rotate and shrink
and closed curves of Lissajous type, which scale under the mapping $T.$
The results of Section~\ref{sec:planar} should
be compared with the {\em zoo of solitons}
obtained for the Euclidean curve shortening flow 
by
Halldorsson~\cite{Halldorsson2012}, see also 
Hungerb{\"u}hler \& Smoczyk~\cite{Hungerbuehler2000} and
Altschuler~\cite{Altschuler2013}.
\section{System of linear differential equations of second order}
\label{sec:linear}
Let $B\in \mnr$ be a $n \times n$ matrix 
with real entries. We denote by $\Identity$ the identity matrix and
use the following notation:\\
Using the {\em matrix exponential} 
$\exp(B):=\sum_{k=0}^{\infty}B^k/k!$
one can also define the matrix functions 
$\cosh(B),\cos(B),\sin(B),\sinh(B)$ as power series in
$B.$
For commuting matrices $B,C$ (i.e.
$BC=CB$) we have
$$
\exp(B+C)=\exp(B)\exp(C)=\exp(C)\exp(B)\; ,
$$
cf. \cite[ch.6.1]{Kuehnel2011}.

\begin{definition}
\label{def:cosA}
For a matrix $B \in M(n,\R)$ we define
the following mappings $\coA, \siA: \R \rightarrow M\left(n,\R\right):$
\begin{eqnarray*}
\coA(t)&:=&\sum_{k=0}^{\infty}\frac{t^{2k}}{(2k)!}B^k=
\Identity+\frac{t^2}{2!}B+\frac{t^4}{4!}B^2+\ldots\\
\siA(t)&:=&\sum_{k=0}^{\infty}\frac{t^{2k+1}}{(2k+1)!}B^k
=t \Identity+\frac{t^3}{3!}B+\frac{t^5}{5!}B^2+\ldots
\end{eqnarray*}
\end{definition}
These power series are obviously convergent and we obtain
the following
\begin{proposition}[{\sc Properties of  $\coA(t),\siA(t)$}]
\label{pro:derivative-coA}
For a matrix $B$ the above defined
mappings $\coA(t),\siA(t)$ 
satisfy the following 
(differential) equations:
\begin{eqnarray*}
\frac{d}{dt} \coA (t)=\coA'(t)=B \cdot\siA (t)\,;\,
\siA' (t)=\coA (t)\\
\coA''(t)=B\cdot\coA(t)\,;\,
\siA''(t)=B\cdot\siA(t)\\
\coA(t)^2-B \cdot\siA(t)^2= \Identity\,.
\end{eqnarray*}
\end{proposition}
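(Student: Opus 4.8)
The plan is to treat all four differential identities as immediate consequences of term-by-term differentiation of the two defining power series, and then to obtain the final algebraic identity by showing that a suitable matrix-valued function is constant. Throughout, the decisive structural fact is that $\coA(t)$, $\siA(t)$ and $B$ are all power series in the single matrix $B$, so they commute pairwise; this is what licenses the rearrangements of products below.

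First I would record that both series define entire $M(n,\R)$-valued functions of $t$: since $\|B^k\|\le\|B\|^k$ in any submultiplicative norm, the entries are dominated by the scalar series for $\cosh$ and $\sinh$ evaluated at $\sqrt{\|B\|}\,t$, so the convergence is uniform on compact $t$-intervals and term-by-term differentiation is legitimate. Differentiating $\coA(t)=\sum_k \frac{t^{2k}}{(2k)!}B^k$ gives $\sum_{k\ge 1}\frac{t^{2k-1}}{(2k-1)!}B^k$; shifting the index $k\mapsto k+1$ and factoring out one copy of $B$ then yields $\coA'(t)=B\cdot\siA(t)$. Differentiating $\siA(t)=\sum_k\frac{t^{2k+1}}{(2k+1)!}B^k$ term by term gives $\siA'(t)=\coA(t)$ directly, with no reindexing needed. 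The two second-order equations follow by composing these: $\coA''(t)=B\,\siA'(t)=B\,\coA(t)$ and $\siA''(t)=\coA'(t)=B\,\siA(t)$.

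For the identity $\coA(t)^2-B\,\siA(t)^2=\Identity$ I would set $f(t):=\coA(t)^2-B\,\siA(t)^2$ and show it is the constant matrix $\Identity$. At $t=0$ one has $\coA(0)=\Identity$ and $\siA(0)=0$, so $f(0)=\Identity$. Differentiating with the product rule and substituting $\coA'=B\,\siA$ and $\siA'=\coA$ gives
\[
f'(t)=B\,\siA(t)\coA(t)+\coA(t)B\,\siA(t)-B\,\coA(t)\siA(t)-B\,\siA(t)\coA(t).
\]
Because all factors are power series in $B$ they commute, so the four terms cancel in pairs and $f'(t)\equiv 0$; hence $f(t)=f(0)=\Identity$ for all $t$.

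There is no serious obstacle in this argument; the only point that genuinely requires care is the commutativity invoked in the last step, without which $f'$ would not collapse to zero. An alternative route to the identity is a direct Cauchy-product computation — multiplying out the two series and collecting the coefficient of $t^{2m}B^m$ reduces the claim to the scalar relation $\cosh^2-\sinh^2=1$ term by term — but the \emph{constant-function} argument above is cleaner and reuses the derivative formulas already established.
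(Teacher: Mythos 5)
Your proof is correct. The paper in fact gives no explicit proof of this proposition --- it only remarks that the power series are obviously convergent --- and your term-by-term differentiation is precisely the argument being taken for granted; the one step needing an idea, the identity $\coA(t)^2-B\,\siA(t)^2=\Identity$, is handled soundly by your constant-derivative argument, since $\coA(t)$, $\siA(t)$ and $B$ are all power series in the single matrix $B$ and hence commute. (The same identity could alternatively be read off from the addition rules the paper derives later from the block matrix exponential, by setting $s=-t$ and using that $\coA$ is even and $\siA$ is odd.)
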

Given a real number $b \in \R$ we denote by
$\coanew, \sianew:
\R \rightarrow \R$ the unique solutions of the
differential equation
\protect
$f''=bf$ 
with
$\coanew(0)=1,\coanew'(0)=0, 
\sianew(0)=0, \sianew'(0)=1.$ Hence 
\begin{equation}
\label{eq:coa}
\coanew(t)=
\left\{
\begin{array}{ccc}
\cosh\left(\sqrt{b}t\right)&;& b>0\\
1 &;& b=0\\
\cos\left(\sqrt{-b}t\right)&;& b<0
\end{array}
\right.
\,;\,
\sianew(t)=
\left\{
\begin{array}{ccc}
\sinh\left(\sqrt{b}t\right)/\sqrt{b}&;& b>0\\
t &;& b=0\\
\sin\left(\sqrt{-b}t\right)/\sqrt{-b}&;& b<0
\end{array}
\right.
\end{equation}
Then we obtain 
for a real number $b$ and the matrix
$B=b \Identity:$
\begin{equation*}
\coA(t)=\co_{b\Identity}(t)=\coanew(t) \Identity\,;\, 
\siA(t)=\si_{b\Identity}(t)=\sianew(t) \Identity\,.
\end{equation*}
If $B=C^2, C\in M\left(n,\complexnumbers\right)$ then
$\coA(t)=\cosh(Ct), C \cdot\siA(t)=\sinh(tC).$ For 
$B=-C^2, C\in M\left(n,\complexnumbers\right)$ we obtain
$\coA(t)=\cos(Ct), C \cdot \siA(t)=\sin(Ct).$
\begin{proposition}[{\sc Homogeneous differential equation}]
\label{pro:ode}
For a matrix $B \in \mnr$ and vectors
$v,w\in \R^n$
the
linear system of ordinary differential equations
(with constant coefficients) of second order:
\begin{equation}
\label{eq:linearode}
\ddot{c}(t)=B c(t)
\end{equation}
with initial values
$c(0)=v, \dot{c}(0)=w$
has the unique solution
\begin{equation}
\label{eq:ctvw}
c(t)=\cvw(t):=\coA(t)(v)+\siA(t)(w)\,.
\end{equation}
Then the one-parameter family of curves $s\mapsto \cas$ 
for $s \in \R$
defined by 
Equation~\eqref{eq:oneparameter}
satisfies
\begin{equation}
\label{eq:csdef}
\cas(t)= \as \;
c(t)
\,,
\end{equation} 
with
$$\as=
\frac{1}{2}\left\{\Identity+\coA(s) \right\}$$
i.e. $c_s$ 
is a linear image of $c$
for any $s$
and Equations~\eqref{eq:hauptgleichung} and Equations~\eqref{eq:A(s)} hold
with $\bs=0$ .
\end{proposition}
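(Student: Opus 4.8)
The plan is to establish the three assertions in turn --- that $\cvw$ solves the initial value problem, that the solution is unique, and that $\cas$ is the claimed linear image of $c$ --- deriving everything from the differential identities collected in Proposition~\ref{pro:derivative-coA} together with uniqueness for linear systems.

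First I would verify by term-by-term differentiation that $c(t)=\coA(t)(v)+\siA(t)(w)$ satisfies Equation~\eqref{eq:linearode}. Inserting $\siA'=\coA$ and $\coA'=B\,\siA$ gives $\dot c(t)=B\,\siA(t)(v)+\coA(t)(w)$ and then $\ddot c(t)=B\coA(t)(v)+B\siA(t)(w)=B\,c(t)$, while the initial values $c(0)=v$, $\dot c(0)=w$ follow at once from $\coA(0)=\Identity$ and $\siA(0)=0$, both read off the power series in Definition~\ref{def:cosA}. For uniqueness I would pass to the first order system for $(c,\dot c)$ with constant block matrix $\left(\begin{smallmatrix}0&\Identity\\ B&0\end{smallmatrix}\right)$; since its right hand side is linear, the classical existence and uniqueness theorem for linear systems with constant coefficients applies, so $\cvw$ is the only solution. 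Equivalently, a difference of two solutions solves the homogeneous problem with zero initial data and hence vanishes.

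The core of the argument is the identity in Equation~\eqref{eq:csdef}. Here I would avoid expanding addition formulas for $\coA,\siA$ directly and instead exploit uniqueness a second time. Set $g(t):=c(t+s)+c(t-s)$. Because each shifted curve again solves Equation~\eqref{eq:linearode}, so does $g$, namely $\ddot g(t)=B\,c(t+s)+B\,c(t-s)=B\,g(t)$. By the uniqueness just established, $g$ coincides with the solution determined by its own initial data, i.e.\ $g(t)=\coA(t)\big(g(0)\big)+\siA(t)\big(\dot g(0)\big)$. Now $g(0)=c(s)+c(-s)$ and $\dot g(0)=\dot c(s)+\dot c(-s)$, and since the series in Definition~\ref{def:cosA} show $\coA$ to be an even and $\siA$ an odd function of $t$, the $w$-terms cancel in $g(0)$ and the $v$-terms cancel in $\dot g(0)$, leaving $g(0)=2\,\coA(s)(v)$ and $\dot g(0)=2\,\coA(s)(w)$.

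Finally I would substitute these back. Because $\coA(s),\coA(t),\siA(t)$ are all power series in the single matrix $B$, they commute, so $\coA(s)$ factors out and $g(t)=2\,\coA(s)\big(\coA(t)(v)+\siA(t)(w)\big)=2\,\coA(s)\,c(t)$. Inserting this into Equation~\eqref{eq:oneparameter} yields $\cas(t)=\tfrac14\big(g(t)+2c(t)\big)=\tfrac12\big(\Identity+\coA(s)\big)c(t)$, which is exactly Equation~\eqref{eq:csdef} with the matrix $\as$ of Equation~\eqref{eq:A(s)} and with $\bs=0$, and it simultaneously verifies Equation~\eqref{eq:hauptgleichung}. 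I do not expect a genuine obstacle: the only point demanding care is the bookkeeping of matrix ordering, which is harmless precisely because everything in sight is a polynomial series in $B$ and therefore commutes. The one mild conceptual step is recognizing that the shifted sum $g$ solves the same equation, which replaces an explicit and more tedious verification of matrix addition formulas by a single appeal to uniqueness.
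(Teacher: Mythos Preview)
Your proposal is correct and follows essentially the same route as the paper: verify that $\cvw$ solves the initial value problem via Proposition~\ref{pro:derivative-coA}, observe that the shifted average again solves Equation~\eqref{eq:linearode}, compute its initial data using the parity of $\coA$ and $\siA$, and then invoke uniqueness together with the commutativity of power series in $B$ to factor out $\tfrac12\{\Identity+\coA(s)\}$. The only cosmetic difference is that the paper works directly with $\cas$ and its initial values $\cas(0),\dotcas(0)$, whereas you first isolate $g(t)=c(t+s)+c(t-s)$ and add the $2c(t)$ term at the end; the computations are otherwise identical.
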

\begin{proof}
From Proposition~\ref{pro:derivative-coA} 
it follows that $c(t)=\cvw(t)$ is a solution of
Equation~\eqref{eq:linearode}
with $c(0)=v, \ddot{c}(0)=w.$
Equation~\eqref{eq:oneparameter} 
implies that $\cas$ is the unique solution
of Equation~\eqref{eq:linearode} with initial conditions:
\begin{eqnarray*}
\cas(0)&=& \frac{1}{4}
\left\{ c(-s)+ 2 c(0)+ c(s)
\right\}=
\frac{1}{2}\left\{
\Identity+\coA (s)\right\}(v) 
\\
\dotcas(0)&=&
\frac{1}{4}
\left\{
\dot{c}(-s)+2\dot{c}(0)+  \dot{c}(s)
\right\}
 =
\frac{1}{2}
\left\{\Identity+\coA (s)
\right\}(w) \,.
\end{eqnarray*}
Using Equation~\eqref{eq:ctvw}
we can write
\begin{eqnarray*}
\cas(t)&=& \frac{1}{2}\,
\coA(t)
\left\{\Identity+\coA(s)\right\}(v)+
\,
\siA(t)
\left\{\Identity+\coA(s)\right\}(w)
\\
&=&
\frac{1}{2}
\,\left\{\Identity+ \coA(s)\right\}\,
\left\{\coA(t)(v)+\siA(t)(w)\right\}
=
\frac{1}{2}
\,\left\{\Identity+\coA(s)\right\}\,
\left(c(t)\right)\,.
\end{eqnarray*}
\end{proof}
\begin{remark}[{\sc Addition rules}]
\rm
The second order linear system~\eqref{eq:linearode}
of ordinary differential equations
is equivalent to the following 
first order linear system of ordinary
differential equations
\begin{eqnarray*}
\frac{d}{dt}
\left(
 \begin{array}{c}
  c(t)\\
\dot{c}(t)
 \end{array}
\right)
=
\left(
\begin{array}{cc}
 0 &\Identity\\
B  & 0
\end{array}
\right)
\left(
 \begin{array}{c}
  c(t)\\
\dot{c}(t)
 \end{array}
\right)
\end{eqnarray*}
with constant coefficients.
The solution of this system 
with initial values
$c(0)=v, \dot{c}(0)=w$ is given by
\begin{equation*}
\left(
 \begin{array}{c}
  c(t)\\
\dot{c}(t)
 \end{array}
\right)
=
 \exp\left(\left(
\begin{array}{cc}
 0 & \Identity\\
B & 0
\end{array}
\right) t\right)
\left(
 \begin{array}{c}
  v\\
w
 \end{array}
\right)
=
\left(
\begin{array}{cc}
 \coA(t) & \siA(t)\\
B \cdot \siA(t) & \coA(t)
\end{array}
\right)
\left(
 \begin{array}{c}
  v\\
w
 \end{array}
\right)\,.
\end{equation*}
Hence the curve $t \mapsto (c(t),\dot{c}(t))$
is the orbit of the point $(v,w) \in \R^n \oplus \R^n$
under a one-parameter group of linear transformations.

Since
\begin{eqnarray*}
 \exp\left(\left(
\begin{array}{cc}
 0 & \Identity\\
B & 0
\end{array}
\right)(t+s)\right)
=
\left(
\begin{array}{cc}
 \coA(t+s) & \siA(t+s)\\
B \cdot \siA(t+s) & \coA(t+s)
\end{array}
\right)
\\
=
\exp\left(\left(
\begin{array}{cc}
 0 & \Identity\\
B & 0
\end{array}
\right)t\right)
\cdot
\exp\left(\left(
\begin{array}{cc}
 0 & \Identity\\
B & 0
\end{array}
\right)s\right)
\\
=
\left(
\begin{array}{cc}
 \coA(t) & \siA(t)\\
B \cdot \siA(t) & \coA(t)
\end{array}
\right)
\cdot
\left(
\begin{array}{cc}
 \coA(s) & \siA(s)\\
B \cdot \siA(s) & \coA(s)
\end{array}
\right)
\end{eqnarray*}
we obtain the following
addition rules:
\begin{eqnarray}
\label{eq:addition-coA}
\coA(t+s)&=&\coA(t)\cdot\coA(s)+B \cdot \siA(t)\cdot\siA(s)\\
\siA(t+s)&=&\coA(t)\cdot\siA(s)+\siA(t)\cdot\coA(s)\,.
\end{eqnarray}
\end{remark}
\begin{remark}[{\sc Roots of $B$}]
\rm
If there is a matrix $C\in \mnr,$ such that $B=C^2$
then the curve
$ c(t)= c[v,Cv](t)=\exp(tC)(v)$
satisfies
$
c(t+s)=\exp(sC) \,c(t),
$
i.e. the curve is the orbit of a point under a
one-parameter group of affine transformations
in $\R^n.$
\end{remark}
Let $c$ be a 
solution of the inhomogeneous
linear differential
equation
$\ddot{c}(t)=B c(t)+d,$ let $U \in \mnr $ be an invertible matrix and
$e$ a vector,
then $f(t)=U c(t)+e$ solves the differential equation
$\ddot{f}(t)=U B U^{-1} f(t)+ \left(\Identity-UBU^{-1}\right) e+U d.$ 
Therefore one
can assume that $B \in \mnr$ has already 
complex Jordan normal form
cf. \cite[Thm. 5.4.10]{Klingenberg1990}
or real Jordan normal form,
cf. \cite[Thm. 5.6.3]{Klingenberg1990}.
If an eigenvalue $\lambda$  of $A$ is not real then
the conjugate value $\overline{\lambda}$ is an eigenvalue,
too. In Section~\ref{sec:planar}
we discuss the possible real Jordan normal
forms for $n=2.$
For a complex number $\lambda$ the
Jordan block $J_m(\lambda)$ is given by:
\begin{equation}
\label{eq:jordanblock}
J_m(\lambda):=
\left.
\left(\begin{array}{cccccc}
\lambda & 1 & 0& 0&\ldots &0\\
0&\lambda &1 &0&\ldots &0\\
\ldots \\
\ldots \\
0&0& 0 &0&\ldots& 1\\
0&0&0&0&\ldots&\lambda
\end{array}
\right)\right\} m\,.
\end{equation}
The {\em complex Jordan normal form} of a real
matrix $B$ consists of Jordan blocks of the
form $J_m(\lambda), m\ge 1$ for an eigenvalue 
$\lambda \in \complexnumbers. $

The {\em real Jordan normal form} of a real matrix
$B$ consists of two different types of
Jordan blocks: For a real eigenvalue
$\lambda$ there are Jordan blocks of the
form  $J_m(\lambda), m\ge 1.$
If $\mu=\alpha+i\beta, \alpha\in \R,
\beta \in \R \setminus \{0\}$ is 
a non-real eigenvalue, then
for some $m\ge 1$ real Jordan blocks of the
form
\begin{equation}
J_{2m}\left(\alpha, \beta\right):=
\left.
\left(\begin{array}{cc}
J_m\left(\alpha\right) & -\beta \Identity\\
\beta \Identity & J_m\left(\alpha\right)
\end{array}
\right)\right\} 2m\,,
\end{equation}
occur.
The real Jordan block
$J_{2m}\left(\alpha,\beta\right)$ 
is always invertible. Hence 
the Jordan normal form of
a singular real matrix
contains a
nilpotent Jordan block of the form $J_m\left(0\right),
m\ge 1.$ 
Therefore
it 
is sufficient to discuss the following cases:
\begin{proposition}[\sc Inhomogeneous 
differential equation]

\label{pro:odeb}

For a matrix $B \in \mnr$ and a vector
$d\in \R^n$
we consider
the inhomogeneous linear
system of ordinary differential equations
(with constant coefficients) of second order:
\begin{equation}
\label{eq:linearodeb}
\ddot{c}(t)=B c(t)+d\,,
\end{equation}
and we consider the
one-parameter family of curves 
$\cas$ 
defined by Equation~\eqref{eq:oneparameter}.
Then we have
\be
\label{eq:csA}
\cas(t)= \as\;
c(t) + \bs
\,,
\ee
with
$$\as=
\frac{1}{2}\left\{\Identity+\coA(s) \right\}$$
i.e.~Equation~\eqref{eq:hauptgleichung} and 
Equation~\eqref{eq:A(s)} hold .\\
We consider three cases:

\begin{itemize}

\item[(a)] If there is a vector $d_*$ such that $d=B \cdot d_*\,,$ then
for $v, w \in \R^n$
the unique solution of Equation~\eqref{eq:linearodeb}
with intitial values $c(0)=v,\dot{c}(0)=w$ 
is given 
by
\begin{equation}
\label{eq:ctvwb}
c(t)=c[v,w](t)=\coA(t)\left(v+d_*\right)+\siA(t)(w)-d_*\,.
\end{equation}
and
\begin{eqnarray*}
b(s)=\halb
\left\{\coA(s)-\Identity \right\}(d_*)
\end{eqnarray*}
\item[(b)]
Let $B$ be the  
nilpotent {\em Jordan block matrix:}
$B=N_n:=J_n(0);$ cf. Equation~\ref{eq:jordanblock}.
Hence $N_n(e_1)=0$
and $N_n(e_j)=e_{j-1}$ for $j\ge 2$
for a basis $e_1,e_2,\ldots,e_n$ of $\R^n.$
Then the unique solution $c_*=c_*(t)$ of 
\begin{equation}
\label{eq:cstar}
\ddot{c_*}(t)=N_n c_*(t)+e_n, c_*(0)=\dot{c_*}(0)=0
\end{equation}
is given by
$$c_*(t)=\left(\frac{t^{2n}}{(2n)!},\ldots,\frac{t^2}{2!}\right).$$
$d \in \R^n$ can be written as
$d=\left(d_1,d_2,\ldots,d_n\right)
=B \cdot d_*+d_ne_n$
with $d_*=\left(0,d_1,d_2,\ldots,d_{n-1}\right).$
And
the unique solution of Equation~\eqref{eq:linearodeb}
with intitial values $c(0)=v,\dot{c}(0)=w$ is given 
by
\begin{eqnarray}
c(t)=
\coA(t)\left(v+d_*\right)+\siA(t)(w)-d_*+d_nc_*(t)
\label{eq:ctsol}
\\
\bas=
\halb
\left\{\coA(s)-\Identity\right\}(d_*)+
\halb\,d_n c_*(s)\,.
\label{eq:ctsol2}
\end{eqnarray}

\item[(c)]
If $B=0$ then the unique solution of 
Equation~\eqref{eq:linearodeb} with initial values
$v=c(0),w=\dot{c}(0)$ and the one-parameter family
$c_s(t)$ is given by:
\ben
c(t)=\cvw(t)=\frac{t^2}{2}\,d+wt+v
\,;\,
c_s(t)=c(t)+\viertel\,s^2 d
\een
and
$$
b\textbf{}(s)=\viertel\,s^2 d\,.
$$
\end{itemize}
\end{proposition}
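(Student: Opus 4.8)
The plan is to reduce the inhomogeneous equation~\eqref{eq:linearodeb} to the homogeneous case handled in Proposition~\ref{pro:ode} by finding a particular solution and then superposing the general homogeneous solution. The general statement that $\cas(t)=\as\,c(t)+\bs$ with $\as=\frac{1}{2}\{\Identity+\coA(s)\}$ should follow exactly as in Proposition~\ref{pro:ode}: once $c$ solves~\eqref{eq:linearodeb}, the curve $\cas$ defined by~\eqref{eq:oneparameter} is a solution of the \emph{inhomogeneous} equation as well (since the averaging in~\eqref{eq:oneparameter} is affine and $\ddot{(\cdot)}$ commutes with translation in $t$), and comparing $\cas$ with the candidate $\as c+\bs$ reduces to checking initial values at $t=0$, which is a direct computation using~\eqref{eq:addition-coA}. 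So the affine-image statement itself is not the obstacle; the work is in producing the explicit particular solution in each of the three cases and reading off $\bs$.

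For case (a) I would verify directly that $c_*(t):=\coA(t)(d_*)-d_*$ is the particular solution of $\ddot{c}=Bc+d$ with zero initial data: differentiating twice and using $\coA''=B\coA$ from Proposition~\ref{pro:derivative-coA} gives $\ddot{c}_*=B\coA(t)(d_*)=B(c_*(t)+d_*)=Bc_*(t)+Bd_*=Bc_*(t)+d$, as required. Adding the homogeneous solution $\cvw(t)=\coA(t)(v)+\siA(t)(w)$ from~\eqref{eq:ctvw} yields~\eqref{eq:ctvwb} after matching initial values. The formula for $\bs$ then comes from inserting~\eqref{eq:ctvwb} into~\eqref{eq:oneparameter}; the constant term $-d_*$ produces exactly $\frac{1}{4}\{-d_*+2(-d_*)+(-d_*)\}=-d_*$ on the left, the $\as c(t)$ piece absorbs the $\coA$- and $\siA$-parts by the same factoring as in Proposition~\ref{pro:ode}, and what is left over is $\bs=\frac{1}{2}\{\coA(s)-\Identity\}(d_*)$.

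Case (b) is the genuine obstacle, because when $B=N_n=J_n(0)$ the vector $e_n$ lies outside the image of $B$, so no $d_*$ with $d=Bd_*$ exists and the case-(a) recipe fails for that component. The plan is to split $d=B\cdot d_*+d_n e_n$ with $d_*=(0,d_1,\dots,d_{n-1})$, handle the $Bd_*$ part exactly as in case (a), and construct by hand the extra particular solution $c_*$ solving~\eqref{eq:cstar}. I would guess $c_*(t)=\bigl(t^{2n}/(2n)!,\dots,t^2/2!\bigr)$ and confirm it directly: applying $N_n$ shifts each coordinate down by one index, so $N_n c_*(t)=\bigl(t^{2n-2}/(2n-2)!,\dots,t^2/2!,0\bigr)$, while $\ddot{c}_*(t)=\bigl(t^{2n-2}/(2n-2)!,\dots,t^2/2!,1\bigr)$, and the difference is precisely $e_n$. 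The initial conditions $c_*(0)=\dot{c}_*(0)=0$ are immediate since every entry is a monomial of degree $\ge 2$. Superposing gives~\eqref{eq:ctsol}, and the contribution of $d_n c_*$ to~\eqref{eq:oneparameter} produces the second term $\frac{1}{2}d_n c_*(s)$ in~\eqref{eq:ctsol2}, once one checks that $\frac{1}{4}\{c_*(t-s)+2c_*(t)+c_*(t+s)\}$ splits as $c_*(t)$ plus a $t$-independent vector equal to $\frac{1}{2}c_*(s)$; this last identity is the one computation I would carry out carefully, since it relies on the even-degree structure of the monomials in $c_*$.

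Finally, case (c) with $B=0$ is elementary: the equation becomes $\ddot{c}=d$, whose solution with the prescribed initial data is $c(t)=\frac{t^2}{2}d+wt+v$, and substituting into~\eqref{eq:oneparameter} gives $\cas(t)=c(t)+\frac{1}{4}s^2 d$ because the linear and constant parts are unchanged by symmetric averaging while the quadratic part contributes the shift $\frac{1}{4}\{(t-s)^2+2t^2+(t+s)^2\}\cdot\frac{d}{2}-\frac{t^2}{2}d=\frac{1}{4}s^2 d$; here $\as=\frac{1}{2}\{\Identity+\coA(s)\}=\Identity$ since $\coA(s)=\Identity$ when $B=0$, consistent with~\eqref{eq:A(s)}. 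Thus every case reduces to a short verification, with the only real subtlety being the ad hoc particular solution $c_*$ forced by the non-surjectivity of the nilpotent block in case (b).
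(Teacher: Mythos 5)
Your treatment of the general affine-image statement and of cases (a) and (c) is correct and follows essentially the paper's route (particular solution plus superposition, with the addition rules~\eqref{eq:addition-coA} handling the symmetric averaging). The genuine gap is in case (b), at precisely the step you single out as the key computation: the identity you propose to verify,
\begin{equation*}
\viertel\left\{c_*(t-s)+2c_*(t)+c_*(t+s)\right\}=c_*(t)+\halb\,c_*(s)\,,
\end{equation*}
is false for every $n\ge 2$. For $n=2$ one has $c_*(t)=\left(t^4/4!,\,t^2/2!\right)$, and the first component of the left-hand side equals
\begin{equation*}
\frac{1}{96}\left\{(t-s)^4+2t^4+(t+s)^4\right\}=\frac{t^4}{24}+\frac{t^2s^2}{8}+\frac{s^4}{48}\,,
\end{equation*}
so the difference from $c_*(t)$ contains the $t$-dependent cross term $t^2s^2/8$: the even-degree structure of the monomials kills the odd cross terms, but not the even ones. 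The correct identity is
\begin{equation*}
\viertel\left\{c_*(t-s)+2c_*(t)+c_*(t+s)\right\}
=\halb\left\{\Identity+\coA(s)\right\}\left(c_*(t)\right)+\halb\,c_*(s)\,,
\end{equation*}
the cross terms being exactly $\halb\left\{\coA(s)-\Identity\right\}\left(c_*(t)\right)$ (for $n=2$, $\coA(s)=\Identity+\frac{s^2}{2}N_2$, and $\halb\left\{\coA(s)-\Identity\right\}\left(c_*(t)\right)$ has first component $t^2s^2/8$). This is not mere bookkeeping: writing $e(t):=\coA(t)(v+d_*)+\siA(t)(w)-d_*$ for the case-(a) part, your version of the identity would give $\cas(t)=\as\left(e(t)\right)+d_n c_*(t)+\mathrm{const}$, which is \emph{not} of the form $\as c(t)+\bs$ claimed in~\eqref{eq:csA}, since $\as c(t)=\as\left(e(t)\right)+d_n\as\left(c_*(t)\right)$; your identity is consistent with the conclusion~\eqref{eq:ctsol2} only when $\coA(s)$ fixes $c_*(t)$, i.e.~only for $n=1$.

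The fix is short. Observe that in $\R^{n+1}$ one has $\left(c_*(t),1\right)=\co_{N_{n+1}}(t)\left(e_{n+1}\right)$, i.e.~$\left(c_*,1\right)$ is the solution of the \emph{homogeneous} equation $\ddot{\hat c}=N_{n+1}\hat c$ with $\hat c(0)=e_{n+1}$, $\dot{\hat c}(0)=0$; applying Proposition~\ref{pro:ode} in dimension $n+1$ and projecting to the first $n$ coordinates (where $N_{n+1}$ restricts to $N_n$, and the last coordinate of $\hat c$ contributes $c_*(s)$) yields exactly the corrected identity. Alternatively, expand the even binomials $\viertel\left\{(t-s)^{2k}+2t^{2k}+(t+s)^{2k}\right\}$ and recognize the mixed terms as the components of $\halb\left\{\coA(s)-\Identity\right\}\left(c_*(t)\right)$. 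With this replacement your argument coincides with the paper's proof of (b), which subtracts $d_nc_*$ from $c$ and invokes part (a). (For what it is worth, the corresponding display in the paper's own proof also misstates this identity, in the opposite way: it keeps the $\as$-action on $c_*(t)$ but drops the $\halb\,c_*(s)$ term, even though its stated conclusion~\eqref{eq:ctsol2} requires precisely the corrected form above.)
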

\begin{proof}
The curve
\ben
c(t)=\cvw(t)=\coA(t)\left(v+d_*\right)+\siA(t)(w)-d_*
\een
with $c(0)=v,\dot{c}(0)=w$ 
satisfies
by Proposition~\ref{pro:derivative-coA}:
\ben
\ddot{c}(t)=B\left\{\coA(t)\left(v+d_*\right)+\siA(t)(w)\right\}=
B\left(c(t)+d_*\right)=Bc(t)+B(d_*)\,.
\een
This already proves Equation~\eqref{eq:linearodeb} in case (a),
or rather
Equation~\eqref{eq:ctvwb}.
The addition rules Equation~\eqref{eq:addition-coA} show:
\begin{eqnarray}
\cas(t)=\viertel\left\{ c(t-s)+2c(t)+ c(t+s)\right\}
\nonumber
\\
=
\viertel \, \left[\coA(t-s)+\coA(t+s)\right]\left(v+d_*\right)
+\halb \, \coA(t)\left(v+d_*\right)+
\nonumber
\\
\viertel \, \left[\siA(t-s)+\siA(t+s)\right](w)
+\halb \,\siA(t)(w)
-d_*
\nonumber
\\
\label{eq:csa}
=\halb
\left\{\Identity+ \coA(s)\right\}
\left[\coA(t)(v+d_*)+\siA(t)(w)\right]-d_*\\
\nonumber
= \Aas (c(t))+b(s)
\end{eqnarray}
with
\begin{equation*}
\Aas=\halb
\left\{
\coA(s)+\Identity 
\right\}
 \,;\,
\bas=\halb \left\{\coA(s)-\Identity \right\}(d_*)\,.
\end{equation*}

\smallskip

(b)
Now let $B=N_n.$ One checks that $c_*(t)$ given by
Equation~\eqref{eq:cstar} is the unique solution of
Equation~\eqref{eq:linearodeb} with $c(0)=\dot{c}(0)=0$
and $b=e_n.$
Then for any solution $c=c(t)$ of Equation~\eqref{eq:linearodeb}
the curve $c(t)-d_n c_*(t)$
is a solution of Equation
\ben
\ddot{e}(t)=B \left(e(t)+d_*\right)=
B e(t)+\left(d_1,\ldots,d_{n-1},0\right)\,.
\een
Hence we conclude from Part (a) that 
$$
e(t)=\coA(t)\left(v+d_*\right)+\siA(t)(w)-d_*.
$$
This implies that $c(t)$ satisfies Equation~\eqref{eq:ctsol}.
Since
\ben
\viertel\left\{
c_*(t-s)+2c_*(t)+ c_*(t+s)
\right\}=
\halb
\left\{ \coA(s)+\Identity\right\}\left(c_*(t)\right)
\een
together with Equation~\eqref{eq:csa} proves 
Equation~\eqref{eq:hauptgleichung} and
Equation~\eqref{eq:ctsol2}.

\smallskip

(c)
For $B=0$ Equation~\eqref{eq:linearodeb} is $\ddot{c}(t)=d,$ hence we obtain
$c(t)=d t^2/2+wt+v$ 
. Furthermore $\coA(s)=\Identity=\as$ and
$\cas(t)=\halb\left\{ 
c(t-s)+2c(t)+ c(t+s)
\right\}
=c(t)+\viertel s^2 d \,.$
\end{proof}
\begin{remark}[\sc Wave equation]

\label{rem:wave}
\rm
Let $c=c(t)$ be a smooth curve 
with the one-parameter family of curves
$c_s$ defined by 
Equation~\eqref{eq:oneparameter}.
Then 
\ben
\frac{\partial^2 \cas (t)}{\partial s^2}=
\viertel \, \ddot{c}(t-s)+\viertel \,\ddot{c}(t+s)=
\frac{\partial^2 \cas (t)}{\partial t^2}
-\halb \, \ddot{c}(t)\,.
\een
Then 
$\tcas(t):=\cas(t)-\halb \,c(t)=
\left\{c(t-s)+c(t+s)\right\}/4$
defines 
a solution of the 
{\em wave equation:} 
\ben
\frac{\partial^2 \tcas (t)}{\partial s^2}=
\frac{\partial^2 \tcas (t)}{\partial t^2}
\een
with initial conditions 
$\widetilde{c}_0(t)=c(t)/2$ and 
\begin{equation*}
\left.\frac{\partial \tcas (t)}{\partial s}\right|_{s=0}=0\,.
\end{equation*}
If the smooth curve $c$ is a solution of 
Equation~\eqref{eq:linearodeb}
then 
Proposition~\ref{pro:odeb}
implies:
\begin{eqnarray*}
\tcas(t)=
\left(\Aas-\Identity/2\right) 
\left(\widetilde{c}_{0}(t)\right)+\bas
=\coA(s)\left(
\widetilde{c}_0(t)\right)+\bas\,.
\end{eqnarray*}
Hence 
during the evolution $s\mapsto \tcas$ 
the 
affine form of the curve $\widetilde{c}=\widetilde{c_0}$ 
is preserved. This motivates the notion
{\em soliton} for these
solutions of the wave equation.
\end{remark}
In the sequel we study which invertible matrices $D$ can
be written in the form $\left(\Identity+\coA(s)\right)/2$
for some real matrix $B.$
Since for any invertible matrix $U$ we have
$\co_{UBU^{-1}}(s)=U \coA(s) U^{-1}$
it is sufficient to check the possible Jordan normal forms
of $\coA(s)$ for the different Jordan blocks $B.$
It turns out that a large class of invertible matrices $D$
can be 
written in this form. 
\begin{proposition}[\sc The 
image of $(B,s)\mapsto \left(\Identity+\coA (s)\right)/2$]
\label{pro:coA}
Consider the mapping
$f: (B,s) \in M(n;\R)\times \R \mapsto 
f(B,s):=\left(\Identity+\coA(s)\right)/2.$
An invertible matrix $D \in \mnr$ 
lies in the image of $f$
if and only if
the (real or complex) Jordan normal form 
of $D$ 
satisfies the following:
If $\lambda <0$ is a real and negative 
eigenvalue of $D$ and if for some
$m\ge 1$ the Jordan block $J_m(\lambda)$
occurs in the Jordan normal form, then
the number of Jordan blocks $J_m(\lambda)$
in the Jordan decomposition of $D$ is even.

\end{proposition}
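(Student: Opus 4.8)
The plan is to treat $f(B,s)$ as a matrix function. Writing $\phi_s(\lambda):=\bigl(1+\cosh(s\sqrt{\lambda})\bigr)/2$, only even powers of $\sqrt{\lambda}$ occur, so $\phi_s$ is an entire function of $\lambda$ with real Taylor coefficients and $f(B,s)=\bigl(\Identity+\coA(s)\bigr)/2=\phi_s(B)$. Since $\frac{s^{2k}}{(2k)!}B^k=\frac{1}{(2k)!}(s^2B)^k$, we have $\coA(s)=\co_{s^2B}(1)$, so after replacing $B$ by $s^2B$ one may assume $s=1$ (the case $s=0$ only yields $D=\Identity$); write $\phi:=\phi_1$. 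The whole argument rests on the standard description of the Jordan form of $\phi(B)$: a block $J_m(\mu)$ of $B$ is carried to a block similar to $J_m(\phi(\mu))$ when $\phi'(\mu)\neq0$ and splits into strictly smaller blocks when $\phi'(\mu)=0$; moreover the generalized eigenspace of $\phi(B)$ for a value $\lambda$ is the direct sum of those of $B$ for all $\mu$ with $\phi(\mu)=\lambda$.

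First I would locate the critical points and the values of $\phi$ on real arguments. From $\phi'(\lambda)=\sinh(\sqrt{\lambda})/(4\sqrt{\lambda})$ (entire, with $\phi'(0)=1/4$) one reads off that $\phi'(\mu)=0$ precisely at the real points $\mu=-k^2\pi^2$, $k\geq1$. For real $\mu$ one has $\phi(\mu)=\bigl(1+\cosh\sqrt{\mu}\bigr)/2\geq1$ if $\mu\geq0$, and $\phi(\mu)=\bigl(1+\cos\sqrt{-\mu}\bigr)/2\in[0,1]$ if $\mu<0$; in particular $\phi(\mu)\geq0$ for every real $\mu$, so a negative eigenvalue of $D$ can arise only from a non-real eigenvalue of $B$, and at every non-real $\mu$ we have $\phi'(\mu)\neq0$. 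For the \emph{only if} direction, suppose $D=\phi(B)$ with $B$ real and let $\lambda<0$ be an eigenvalue of $D$. The set of eigenvalues $\mu$ of $B$ with $\phi(\mu)=\lambda$ contains no real number and, as $\phi$ has real coefficients and $\lambda$ is real, is invariant under conjugation, hence splits into conjugate pairs $\{\mu,\overline\mu\}$. Since $B$ is real, $\mu$ and $\overline\mu$ carry the same number of Jordan blocks of each size $m$, and since $\phi'(\mu)\neq0$ each such block contributes exactly one block $J_m(\lambda)$ to $D$; summing over the pairs shows the number of blocks $J_m(\lambda)$ is even.

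For the converse I would build $B$ as a block sum, using $\co_{UBU^{-1}}(s)=U\coA(s)U^{-1}$ and the compatibility of $\phi$ with block-diagonal $B$, so that it suffices to realize each eigenvalue of $D$ with its prescribed pattern. A real eigenvalue $d\geq1$ (resp.\ $d\in(0,1)$) comes from a single real block $J_m(\mu)$ with $\mu\geq0$ (resp.\ $\mu<0$) solving $\phi(\mu)=d$; here $\phi'(\mu)\neq0$, so the block stays clean, and the borderline $d=0$ is excluded because $D$ is invertible. A non-real $d$ comes from a non-real $\mu$ with $\phi(\mu)=d$ (such $\mu$ exists since $\cosh$, hence $\phi$, is onto $\complexnumbers$, and is automatically non-real, so $\phi'(\mu)\neq0$), its conjugate block handling $\overline d$. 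Finally a negative $\lambda<0$ is realized by choosing $w=a+\pi i$ with $\cosh a=1-2\lambda>1$: then $\mu=w^2$ is non-real, $\phi(\mu)=\lambda$, and $\phi'(\mu)\neq0$, and the conjugate pair $\{\mu,\overline\mu\}$ produces two blocks $J_m(\lambda)$ at once, so an even prescribed number of them is obtained by repeating this pair. Assembling the pieces yields a real $B$ with $\phi(B)=D$.

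I expect the main obstacle to be the bookkeeping of Jordan blocks under $\phi$: one must justify carefully that non-real eigenvalues never cause splitting (this is exactly the condition $\phi'(\mu)\neq0$ established in the second step) and that a conjugate pair of $B$-blocks mapping to a single real negative $\lambda$ is precisely what both \emph{forces} the even count in the \emph{only if} direction and \emph{supplies} it in the converse. By contrast, the determination of the critical set of $\phi$ and of its real image is routine once the reduction to $s=1$ and the matrix-function viewpoint are in place.
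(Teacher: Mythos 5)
Your proposal is correct, but it is organized around a different key lemma than the paper's proof. You package everything into the single entire function $\phi_s(\lambda)=\bigl(1+\cosh(s\sqrt{\lambda})\bigr)/2$ (legitimate, since only even powers of $\sqrt{\lambda}$ occur), reduce to $s=1$ via $\coA(s)=\co_{s^2B}(1)$, and then invoke the standard theory of primary matrix functions: a Jordan block $J_m(\mu)$ of $B$ maps to a single block $J_m(\phi(\mu))$ exactly when $\phi'(\mu)\neq 0$, together with the spectral mapping for generalized eigenspaces. The whole proposition then follows from three facts you establish about $\phi$: its critical points are precisely the real points $-k^2\pi^2$, $k\ge 1$; $\phi(\R)\subseteq[0,\infty)$, so negative eigenvalues of $D$ can only come from non-real eigenvalues of $B$; and conjugation symmetry of the spectrum of a real $B$ forces (only if) and supplies (converse) the even block count, with your explicit choice $w=a+\pi i$, $\cosh a=1-2\lambda$, realizing each conjugate pair. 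The paper instead proves the needed matrix-function facts by hand: it runs through the possible Jordan block types of $B$ (scalar real, scalar complex, $J_m(0)$, $J_m(\lambda)$ with $\lambda\in\R\setminus\{0\}$, $J_m(u)$ with $u\notin\R$), expands $\co_J(s)$ as an explicit polynomial in the nilpotent part, and uses rank computations such as ${\rm rank}\left(\co_J(s)-\cos_\lambda(s)\Identity_m\right)=m-1$ to identify the Jordan form of the image; the non-vanishing of $\sin_\lambda(s)$ or $\sinh(ws)$ in those expansions is exactly your condition $\phi'(\mu)\neq 0$ in disguise. What your route buys is brevity and a uniform treatment of all block sizes at once, at the cost of citing a nontrivial external theorem; the paper's route is self-contained and makes the coefficient of the nilpotent part explicit (which it then reuses in Section 5 to write down the shear matrices $A(s)$ in Cases (3) and (4)). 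One point to make airtight in your write-up: in the converse, your blocks $J_m(\mu)\oplus J_m(\overline{\mu})$ are complex, so you should say explicitly that this sum is similar to a real matrix (real Jordan form) and that two real matrices that are similar over $\complexnumbers$ are similar over $\R$, after which the identity $\co_{UBU^{-1}}(s)=U\coA(s)U^{-1}$ finishes the job; the paper is equally terse on this step.
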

In particular a diagonizable matrix $D$ 
can be written as:
$D=f(B,s)=\left(\Identity+\coA(s)\right)/2$ 
for some $(B,s) \in \mnr\times \R$
if and only if the eigenspaces
of all real and negative eigenvalues
are even-dimensional.

\begin{proof}
We compute for the possible complex Jordan normal
forms $J$ of a real matrix $B$ the complex 
Jordan normal
form of $\co_J(s)$ resp. $f(J,s).$
Since the matrix $D=\left(\Identity+\coA(s)\right)/2$
is supposed to be invertible we
exclude $-1$ as eigenvalue of $\coA(s).$

\smallskip

(a) If $J=\lambda \cdot\Identity, \lambda \in \R,$
then
$f(J,s)=(1+\cos_{\lambda}(s))/2 \cdot \Identity.$
Note that
$\left\{(1+\cos_{\lambda}(s))/2\,;\, \lambda, s \in \R \right\}=
\{ x \in \R\,;\, x\ge 0\}.$
Hence $f(J,s)$
is a diagonal matrix
with non-negative real eigenvalues.

\smallskip

(b) Assume the matrix $B$ contains
a complex Jordan block $J=u \cdot \Identity_m, u\in \complexnumbers \setminus \R,$  then also $\overline{J}=\overline{u}\cdot \Identity$
is a (distinct) Jordan block, i.e.~there is an
even-dimensional
invariant subspace on which the Jordan normal
form is given by
$J \oplus \overline{J}=
u \cdot \Identity_m \oplus \overline{u}\cdot \Identity_m.$
Then
$f\left(J\oplus \overline{J},s\right)=\left(1+\cosh(ws)\right)/2
\cdot \Identity \oplus
\left(1+\cosh(\overline{w}s)\right)/2
\cdot \Identity,$
here $w \in \complexnumbers, u=w^2.$
Note that  $\cosh: \complexnumbers \rightarrow \complexnumbers$ is surjective. If $\cosh(ws)\in \R$ the corresponding
eigenspace  is even-dimensional.

We conclude from (a) and (b):
A real and invertible  matrix $D$ which is diagonalizable over
$\complexnumbers$ can be written in the form
$D=f(B,s)$
for a 
matrix $B$ diagonalizable over $\complexnumbers$
and a real number $s$
if and only if
it does not have a real and negative eigenvalue
$\lambda$ with an odd-dimensional eigenspace.

\smallskip

(c) Now assume that 
$J=J_m(u)$ with $m\ge 2.$
For $u=0:$ we obtain for $N_m=J_m(0):$
\begin{equation}
\co_{N_m}(s)= \Identity_m+\frac{s^2}{2!}N_m+\ldots+
\frac{s^{2m-2}}{(2m-2)!}N_m^{m-1}\,.
\end{equation}
Note that the matrix $N_m^k$ satisfies
$N_m^k(e_{l+k})=e_{l}, l=1,2,\ldots, m-k, N_m^k(e_l)=0,
l=1,2,\ldots,k-1.$
Since for $s\not=0$
$${\rm rank} \left(\co_{N_m}(s)-\Identity_m
\right)=m-1$$
we obtain: For $s\not=0$ the matrix
$\co_{N_m}(s)$ is conjugate to the Jordan block
$J_m(1)$
and $f(N_m,s)$ is also conjugate to the Jordan
block $J_m(1).$

For $u\not=0$ we obtain:
\begin{eqnarray}
\label{eq:cou}
\co_J(s)=\sum_{k=0}^{\infty} \frac{s^{2k}}{(2k)!}J^k=
\sum_{k=0}^{\infty}\frac{s^{2k}}{(2k)!}
\sum_{j=0}^{\min(m,k)}
\left(
\begin{array}{c}
k\\
j
\end{array}
\right)
u^{k-j}N_m^j
\\ \nonumber
= \sum_{k=0}^{\infty}\frac{s^{2k}}{(2k)!}u^k \cdot \Identity_m + 
\frac{s}{2}
\sum_{k=1}^{\infty}\frac{s^{2k-1}}{(2k-1)!}
u^{k-1}\cdot N_m+
\\ \nonumber
 + 
\alpha_2(s) N^2_m+\ldots +
\alpha_{m-1}(s)\cdot N_m^{m-1}
\nonumber
\end{eqnarray}
for some $\alpha_2(s),
\ldots,\alpha_{m-1}(s) \in \complexnumbers.$
If $u=\lambda\in \R$ we have
\begin{equation*}
\co_J(s)=\cos_{\lambda}(s)\cdot \Identity_m + 
\frac{s}{2}\,\sin_{\lambda}(s)\cdot N_m +
\alpha_2 N_m^2+\ldots+
\alpha_{m-1} N_m^{m-1}\,.
\end{equation*}
Note that $\{\cos_{\lambda}(s)\,;\,\lambda, s \in \R\}
=\{x\in \R, x\ge -1\}$ and that
$\sin_{\lambda}(s)\not=0$ whenever $\cos_{\lambda}(s)
\not=\pm 1.$
Hence for $\cos_{\lambda}(s)\not=\pm 1$ the
matrix $\co_J(s)$ satisfies:
$${\rm rank} \left(\co_J(s)-\cos_{\lambda}(s)
\cdot \Identity_m\right)=m-1$$
i.e.~the matrix
$f\left(J_m(\lambda),s\right)$ is conjugate to 
the 
Jordan block 
$J_m\left(\left(1+\cos_{\lambda}(s)\right)/2\right).$
Thus we have shown that a Jordan block
$J_m(\mu)$ with real $\mu\not=0$
and $m\ge 2$
can be written in the form 
$f\left(J_m\left(\lambda\right),s\right)$
for some real $\lambda$ and some $s \in \R$
up to conjugacy
if and only if $\mu$ is positive.

\smallskip

(d) Now assume that
the real matrix $B$ contains  a complex
Jordan block $J=J_m(u),
u \in \complexnumbers \setminus \R.$ 
Then let $w \in \complexnumbers \setminus \R$
be a square root, i.e.~$u=w^2.$
Hence also 
$\overline{J}=J_m(\overline{u})$
is a 
(distinct) Jordan block of $B,$
or rather
there is an invariant subspace
on which the Jordan normal form of $A$
is given by
$J \oplus \overline{J}=
J_m(u)\oplus J_m(\overline{u}).$
We conclude from Equation~\ref{eq:cou}:
\ben
\co_{J}(s)
= \cosh(ws) \cdot \Identity_m + 
\frac{s}{2 w} \sinh(ws)
N_m 
+ \alpha_2(s) N^2_m
+\ldots 
+\alpha_{m-1}(s)\cdot N_m^{m-1}
\een
for some $\alpha_2(s),
\ldots,\alpha_{m-1}(s) \in \complexnumbers.$
If $\cosh(ws)\not=\pm 1$ we have
$\sinh(ws)\not=0,$ 
i.e. 
$${\rm rank} \left(\co_{J}(s)-\cosh(ws)\cdot \Identity_m
\right)=m-1$$
or rather
the matrix $\co_{J}(s)$ is conjugate
to a Jordan block
$J_m\left(\cosh(ws)\right)$
and
$f\left(J \oplus\overline{J}\right)$
is conjugate to 
$J_m\left(\left(1+\cosh(ws)\right)/2\right)
\oplus
J_m\left(\left(1+
\cosh(\overline{w}s)\right)/2\right)\,.$
We conclude from (c) and (d) that an
invertible and
real matrix $D$ whose Jordan normal form
contains
a Jordan block 
$J=J_m(v), m\ge 2$
can be written in the form
$f(B,s)$
for a real matrix $B$ and $s \in \R$
if and only if
the following holds:
If $v$ is real and negative $v=\lambda<0$ then the 
number of
Jordan blocks $J_m\left(\lambda\right)$ in the Jordan decomposition
of $B$ is even.
\end{proof}

\section{Discrete Curve Shortening}
An (infinite) {\em polygon} 
$x=\left(x_j\right)_{j\in \z}$ in  $\R^n$ is defined by
its {\em vertices} $x_j\in \R^n.$ 
We call $\po=\po(\R^n)$ the vector space of these polygons.
We can identify the polygon $x$ with the piecewise linear curve
$x:\R \rightarrow \R^n$ which is a straight line on any interval
$[j,j+1]$ and satisfies $x(j+u)=(1-u)x_j+u x_{j+1}$
for any $u \in [0,1], j\in \z.$
If there is a positive number $N$ such that
$x_{j+N}=x_j$ for all $j\in \z,$ then we call
the polygon $x$  {\em closed} 
or rather
{\em periodic}
with $N$ vertices 
or rather
of period $N.$ In this case we can
identify the index set with $\z_N=\z/(N \cdot\z).$ We denote the 
set of closed polygons with $N$ vertices by $\poN=\poN(\R^n).$
The {\em midpoint mapping} is given by
\ben
M: x \in \po(\R^n) \longmapsto M(x)\in \po(\R^n)\,;\,
\left(M(x)\right)_j:=\frac{1}{2}\left(x_j+x_{j+1}\right).
\een
For a closed polygon $x \in \poN (\R^n)$ its {\em length}
is given by\\
$
L\left(x\right):=\sum_{j=0}^{N-1}\left\|x_{j+1}-x_j\right\|\,,
$
here $\left\|.\right\|$ denotes the Euclidean norm.
\smallskip

The triangle inequality implies
the following curve shortening property of the midpoint mapping
in the general case:
\begin{eqnarray*}
L\left(M(x)\left|\left[j,j+m\right]\right.\right)&=&
\sum_{k=1}^{m}\left\|\left(M(x)\right)_{j+k}-
\left(M(x)\right)_{j+k-1}\right\|\\
&\le&
L\left(x\left| \left[j+1/2,j+m+1/2\right]\right.\right)
\end{eqnarray*}
with equality if and only if the points $x_j,x_{j+1},\ldots,x_{j+m}, x_{j+m+1}$ lie
on a straight line.
Thus
the midpoint mapping $M$ is  {\em length decreasing} on closed polygons, i.e.~for all closed polygons $x$ we have:
$L\left(M(x)\right)\le L(x)$
with equality if and only if $x$ is constant, i.e.~$L(x)=0.$
We modify the midpoint mapping as follows:
\begin{definition}[\sc Curve shortening process]

\label{def:selfsimilar}

We introduce the following mapping $T: \po (\R^n)
\rightarrow  \po (\R^n):$
\ben
\left(T(x)\right)_j=
\viertel\left\{
x_{j-1}+2x_j+x_{j+1}
 \right\}\,.
\een

(a)
We call a polygon $x=(x_j)_{j\in \z}$ {\em affinely invariant}
under the mapping $T$ 
or rather an {\em (affine) soliton} of the curve shortening process
$T$)
if there is an affine map
$(A,b), A\in Gl\left(n,\R\right), b\in \R^n$ such that for all $j\in \z:$
\begin{equation}
\label{eq:txj}
T(x)_j=
\viertel\left\{
x_{j-1}+ 2x_j+x_{j+1}
\right\}=
A\left(x_j\right)+b.
\end{equation}
(b)
We call a smooth curve $c:\R \rightarrow \R^n$
{\em affinely invariant} under the mapping $T$
(or rather
 an {\em (affine) soliton} of the curve shortening process
$T$) if there is some $\epsilon>0$ such that 
there is a one-parameter family 
 $s\in (-\epsilon,\epsilon)
 \mapsto \left(\as,\bs\right)
\in Gl(n,\R)\times \R^n$
of affine 
maps such that
\be
\label{eq:axj}
c_s(t)=\viertel\left\{
c(t-s)+ 2c(t)+ c(t+s)
\right\}
=
\as c(t)+\bs
\ee
for all $s\in (0,\epsilon),t\in \R.$
\end{definition}
It is obvious that these notions are affinely invariant. 
For $a \in \R, s \in (0,\epsilon)$ 
the polygon $x=x(a,s)$ with
$x(a,s)_j=c(a+sj), j \in \z$
lying on a smooth curve $c:\R \longrightarrow \R^n$
is a soliton of the curve shortening process $T$
if the curve $c$ is also a soliton of the 
corresponding process $T$ on curves.
On the other hand: A polygon 
$x=\left(x_j\right)_{j\in\z}$
which is a soliton of the curve shortening
process $T$ satisfying Equation~\eqref{eq:txj}
can be obtained from a smooth curve, which is
a soliton as defined in Equation~\eqref{eq:axj}
if and only if $A$ can be written in the
form $A=\left(\Identity+\coA(s)\right)/2.$
In Proposition~\ref{pro:coA} 
the Jordan normal form of these matrices are
classified.

\begin{remark}[\sc Eigenpolygons of $T$]
\rm
If we consider closed polygons
then
the midpoint mapping defines a linear map
$M: \po_N \longrightarrow \po_N$
on the $(n\cdot N)$-dimensional vector space $\po_N,$ and one can use a
decomposition into eigenspaces, 
cf. \cite{Schoenberg1950} and \cite{Berlekamp1965}.
The matrix is in particular {\em circulant.} 
For $n=2$ one can identify the complex numbers
$\complexnumbers$ with $\R^2,$ then the 
eigenvalues are
$\lambda_k=\left(1+\exp(2\pi i k/N)\right)/2\,, k=0,1,\ldots,N-1.$
The corresponding eigenvectors are the polygons
\be
\label{eq:berlekamp}
z^{(k)}=\left(z^{(k)}_1,\ldots,z^{(k)}_N\right)\in \complexnumbers^N\enspace;\enspace
z^{(k)}_j=\exp\left(2\pi i j k/N\right)\,
\ee
for $k=0,1,\ldots,N-1.$
Then the linear map $T:\po_N \longrightarrow \po_N$
is also circulant and has the form
\ben
T=\frac{1}{4}
\left(
\begin{array}{ccccccccc}
2 &1&0&0&\ldots&0&0&0&1\\
1&2&1&0 & \ldots&0&0&0&0\\
0&1&2&1&\ldots&0&0&0&0\\
\ldots\\
\ldots\\
\ldots\\
0&0&0&0&\ldots&1&2&1&0\\
0&0&0&0&\ldots&0&1&2&1\\
1&0&0&0&\ldots&0&0&1&2
\end{array}
\right)
\een
with eigenvalues
\ben
\mu_k=\mu_{N-k}=
\frac{1}{2}\left(1+\cos\left(\frac{2\pi k}{N}\right)\right),
 k=0,1,\ldots,N-1
\een
and eigenvectors $z^{(k)}$ given by Equation~\eqref{eq:berlekamp}.
Note that all polygons $z^{(k)}$ given by
Equation~\eqref{eq:berlekamp} lie on the
unit circle $c(t)=\exp(2\pi i t)$ and
$z^{(k)}_j=c\left(kj/N\right).$ The eigenpolygons $z^{(k)}$ are 
solitons on which the map $T$ is a homothety.
Although the curve $c$ is simple and convex
the polygons $z^{(k)}$ form regular $N$-gons which 
are simple and convex only for $k=1,N-1,$ 
cf.~\cite[Fig.5]{Berlekamp1965}.
\end{remark}

\begin{proposition}[\sc assignment 
matrix $A$ $\rightarrow $ polygon]

\label{pro:uniqueness}

Let $(A,b): x \in \R^n \longmapsto Ax+b \in\R^n$ be an affine map
and $u,v \in \R^n$ be two points in $\R^n,$
and $j_0\in \z.$
Then there is a unique polygon $x \in \po(\R^n)$ with $x_{j_0}=u,x_{j_{0}+1}=v$
which 
is affinely invariant
(with respect to $A$ and $b$)
under the mapping $T.$
\end{proposition}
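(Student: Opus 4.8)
The plan is to recognize the soliton condition~\eqref{eq:txj} as a second-order linear recurrence in the vertices and to solve it by induction in both index directions. First I would rewrite the defining equation
\[
\viertel\left\{x_{j-1}+2x_j+x_{j+1}\right\}=A x_j+b
\]
by isolating the highest index, obtaining
\[
x_{j+1}=\left(4A-2\Identity\right)x_j-x_{j-1}+4b\,,
\]
and, symmetrically, by isolating the lowest index,
\[
x_{j-1}=\left(4A-2\Identity\right)x_j-x_{j+1}+4b\,.
\]
Both rearrangements are licit because the coefficients of $x_{j+1}$ and of $x_{j-1}$ in~\eqref{eq:txj} are the identity matrix; in particular no invertibility of $A$ is needed for this proposition.

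For existence I would set $x_{j_0}=u$, $x_{j_0+1}=v$ and define the remaining vertices inductively: the first recurrence produces $x_{j_0+2},x_{j_0+3},\ldots$ from the two preceding terms, while the second recurrence produces $x_{j_0-1},x_{j_0-2},\ldots$ from the two succeeding terms. Since each step expresses a new vertex as an affine function of two already-defined vertices, this uniquely extends the seed data to a polygon $x=(x_j)_{j\in\z}\in\po(\R^n)$, and by construction every index $j$ satisfies $x_{j-1}+2x_j+x_{j+1}=4Ax_j+4b$, i.e.~the soliton equation~\eqref{eq:txj}.

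For uniqueness I would argue that any polygon $x'$ which is affinely invariant with respect to $(A,b)$ and satisfies $x'_{j_0}=u$, $x'_{j_0+1}=v$ must obey the same two recurrences. An induction in each direction, starting from the agreement $x'_{j_0}=x_{j_0}$ and $x'_{j_0+1}=x_{j_0+1}$, then forces $x'_j=x_j$ for all $j\in\z$. There is really no serious obstacle here: the only point requiring care is the bookkeeping of the two opposite directions and the observation that both are solvable precisely because the outer coefficients equal $\Identity$. The statement is thus an immediate consequence of the elementary theory of second-order linear recurrences with matrix coefficients, and the hypothesis $A\in Gl(n,\R)$ plays no role in the argument.
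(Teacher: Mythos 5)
Your proof is correct and takes essentially the same approach as the paper: the paper's proof also rewrites the soliton equation~\eqref{eq:txj} as the two-sided recursion $x_{j\pm 1}=2\left(2A-\Identity\right)(x_j)-x_{j\mp 1}+4b$ and notes that this uniquely determines the polygon from the seed $x_{j_0}=u$, $x_{j_0+1}=v$. Your write-up merely makes the two inductions explicit and adds the (correct) observation that invertibility of $A$ is never used.
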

\begin{proof}
If $x\in \po(\R^n)$ is affinely
invariant under the mapping $T$
we have
\begin{equation}
\left(T(x)\right)_j=\viertel
\left\{
x_{j-1}+
2x_j+  x_{j+1}
\right\}
=
A\left(x_j\right)+b\,.
\end{equation}
Hence the sequence $\left(x_j\right)$ with $x_{j_0}=u,x_{j_{0}+1}=v$
is uniquely determined by
the recursion formulae
\begin{equation}
x_{j+1}=2\left(2 A-\Identity\right)(x_j)-x_{j-1}+4b
\,;\,
x_{j-1}=2\left(2A-\Identity\right)(x_j)-x_{j+1}+4b\,.
\end{equation}
\end{proof}
For a given smooth curve $c:\R \rightarrow \R^n$ 
we define the
one-parameter family $c_s: \R \rightarrow \R^n$
by Equation~\eqref{eq:oneparameter}.
The curves $c_{s}$ are obtained from $c=c_{0}$ by applying the 
mapping $T$ as follows.
For arbitrary $a \in \R, s>0$ let $x=x(a,s)$ 
be the polygon
$x_j=x_j(a,s):=c\left(a+js\right), j\in \z.$
Then
\ben
c_{s}\left(a+js\right)=\left(T(x)\right)_j=
\viertel\left\{
c(a+(j-1)s)+
2c(a+js)+ c(a+(j+1)s)
\right\}\,.
\een
Hence the vertices $\left(T\left(x(a,s)\right)\right)_j$ of the 
image $T\left(x(a,s)\right)$ of the polygon
$x=x(a,s)$ 
under the mapping $T$
lie on the curve $c_{s},$ 
or rather
 the curve $c_{s}$ is formed 
by the images $T\left(x\left(a,s\right)\right)$ 
of polygons of the form $x=x(a,s)$ on the curve $c.$
\begin{theorem}[\sc solitons as solutions of an ode]
\label{thm:characterization}
Let
$c: \R \rightarrow \R^n$ be a smooth curve such that the
one-parameter family
defined by Equation~\eqref{eq:oneparameter}
defined for all $s \in (-\epsilon,\epsilon)$ 
and some $\epsilon >0$
satisfies:
$$ c_{s}(t)=\as(c(t))+\bs $$
for all $t \in \R, -\epsilon<s<\epsilon$
and for a smooth
one-parameter family $s \mapsto \as\in Gl(n,\R)$ of linear isomorphisms 
and a smooth curve $s \mapsto b(s)\in \R^n.$
I.e.~the curve $c$ is {\em affinely invariant} under the mapping $T,$
cf. Definition~\ref{def:selfsimilar} (b).
Assume in addition that for some $t_0\in \R$ the 
vectors 
$\dot{c}(t_0),\ddot{c}(t_0),\ldots,c^{(n)}(t_0)$ 
are linearly independent.

Then the curve $c$ is the unique solution
of the differential equation
\begin{equation}
\label{eq:ddotc}
\ddot{c}(t)=B c(t)+d
\end{equation}
with initial conditions $v=c(0),w=\dot{c}(0).$
The functions $s\mapsto \as\in M\left(n,\R\right),s\mapsto \bs\in \R^n$
satisfy the differential equations
\begin{equation}
\label{eq:a2s}
A''(s)=\asa B\,;\, 
b''(s)=\asa d
\end{equation}
with $B=2 A''(0), d=2b''(0)$
and initial conditions $A(0)=\Identity, b(0)=0,
A'(0)=0, b'(0)=0.$ In particular, we have
$\as=\left(\Identity +\coA(s)\right)/2,$ 
cf. Proposition~\ref{pro:odeb}.
The 
possible Jordan normal forms of 
the matrices $A(s)$ are given in
Proposition~\ref{pro:coA}.
\end{theorem}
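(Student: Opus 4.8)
The plan is to extract the differential equation~\eqref{eq:ddotc} and the equations~\eqref{eq:a2s} by repeatedly differentiating the soliton relation
\[
\viertel\left\{c(t-s)+2c(t)+c(t+s)\right\}=\as c(t)+\bs
\]
in the parameter $s$ and then using the hypothesis that $\dot{c}(t_0),\ddot{c}(t_0),\ldots,c^{(n)}(t_0)$ form a basis of $\R^n$ to strip $c(t)$ off any matrix identity of the form ``$(\text{matrix})\cdot c(t)=\text{const in }t$''. First I would record the initial values. Setting $s=0$ gives $c(t)=A(0)c(t)+b(0)$, and differentiating once in $s$ before setting $s=0$ gives $0=A'(0)c(t)+b'(0)$, both for all $t$. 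Differentiating each identity in $t$ and evaluating at $t_0$, the spanning hypothesis forces $A(0)=\Identity$ and $A'(0)=0$, and then $b(0)=0$, $b'(0)=0$.

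Next I would produce the ODE for $c$. Since $\partial_s^2 c_s(t)=\viertel\{\ddot{c}(t-s)+\ddot{c}(t+s)\}$, evaluating the second $s$-derivative of the soliton relation at $s=0$ yields $\halb\ddot{c}(t)=A''(0)c(t)+b''(0)$ for all $t$, i.e. $\ddot{c}(t)=Bc(t)+d$ with $B=2A''(0)$ and $d=2b''(0)$. This is a genuine identity (no spanning hypothesis needed) and already identifies $B$ and $d$ as asserted.

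For the equations~\eqref{eq:a2s} I would differentiate twice in $s$ at a general $s$. The left-hand side is $\viertel\{\ddot{c}(t-s)+\ddot{c}(t+s)\}$; substituting $\ddot{c}(t\pm s)=Bc(t\pm s)+d$ and then using the soliton relation rewritten as $\viertel\{c(t-s)+c(t+s)\}=\asa c(t)+\bs$ turns it into $B\asa c(t)+B\bs+\halb d$. Comparing with the right-hand side $A''(s)c(t)+b''(s)$ gives
\[
\left(A''(s)-B\asa\right)c(t)=B\bs+\halb d-b''(s)
\]
for all $t$, with constant right-hand side. Differentiating in $t$ and invoking the spanning hypothesis at $t_0$ forces the matrix coefficient to vanish, so $A''(s)=B\asa$ and consequently $b''(s)=B\bs+\halb d$. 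The unique solution of this matrix ODE with $A(0)=\Identity$, $A'(0)=0$ is $\as=\halb(\Identity+\coA(s))$ by Proposition~\ref{pro:derivative-coA}; since this $\as$ commutes with $B$, one has $B\asa=\asa B$, so the derived equations coincide with~\eqref{eq:a2s}, and Proposition~\ref{pro:odeb} completes the identification of $\as$ and $\bs$. Uniqueness of $c$ is the standard uniqueness for the linear system~\eqref{eq:ddotc} with prescribed $c(0)=v,\dot{c}(0)=w$, and the admissible Jordan forms of $\as$ are read off from Proposition~\ref{pro:coA}.

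The step that needs the most care is the repeated passage from ``$(\text{matrix})\cdot c(t)=\text{const in }t$'' to ``matrix $=0$'': one differentiates in $t$ and evaluates at the single point $t_0$ where independence is guaranteed, which is legitimate precisely because the matrix is constant in $t$. A secondary point worth flagging is the order of multiplication by $B$: the computation naturally produces $A''(s)=B\asa$ with $B$ on the left, which matches the stated $\asa B$ only because the actual solution $\as$ commutes with $B$.
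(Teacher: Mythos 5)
Your argument is correct, and it rests on the same two pillars as the paper's proof: extracting the initial conditions $A(0)=\Identity$, $A'(0)=0$, $b(0)=b'(0)=0$ by differentiating in $t$ and invoking the spanning hypothesis at $t_0$, and exploiting $\partial_s^2 c_s(t)=\frac14\{\ddot c(t-s)+\ddot c(t+s)\}$. Where you genuinely diverge is the order of the middle steps. The paper stays at general $s$: it writes $A''(s)c(t)+b''(s)=\asa\ddot c(t)$, inverts $\asa$ (invertible for small $s$ since $A(0)=\Identity$), and uses the $s$-independence of $\ddot c(t)$ --- one further $s$-differentiation plus the spanning hypothesis --- to show that $\asminus A''(s)$ and $\asminus b''(s)$ are constant in $s$; equating them to their values $B=2A''(0)$ and $d=2b''(0)$ at $s=0$ gives \eqref{eq:a2s} exactly in the stated form, and \eqref{eq:ddotc} then follows by setting $s=0$. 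You instead obtain \eqref{eq:ddotc} immediately at $s=0$ and substitute it back into the relation at general $s$. This buys you something: no matrix inversion and no third differentiation in $s$, so your derivation is more elementary. The cost, which you correctly flag, is that your equations come out with $B$ on the left --- $A''(s)=B\asa$ and $b''(s)=Bb(s)+\halb d$ --- rather than in the stated form \eqref{eq:a2s}; to land on the theorem's statement you must solve your ODEs (getting $\as=\halb\left(\Identity+\coA(s)\right)$, which commutes with $B$) and then match the $b$-equation. Your appeal to Proposition~\ref{pro:odeb} for that last identification implicitly uses that the affine pair $\left(\as,\bs\right)$ realizing the soliton relation is unique, which again follows from the spanning hypothesis; alternatively one checks directly that the unique solution of $b''=Bb+\halb d$, $b(0)=b'(0)=0$, also satisfies $b''(s)=\asa d$ because $\coA''=B\cdot\coA$. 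That small step deserves to be said explicitly, but it is routine, so there is no genuine gap.
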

For a one-parameter family 
$s \mapsto \cas$ of curves or
a one-parameter family $ s\mapsto \as, s\mapsto \bs$
of matrices $\as$ or vectors $\bs$ we denote
differentiation with respect to $s$ by $'.$
Differentiation with respect to the curve parameter $t$ of
a one-parameter family of curves $c_s(t)$ or a single curve
$t \mapsto c(t)$ is denoted by
$\dot{c}$.

Note that for the {\em potential}
\begin{equation}
 \label{eq:potential}
 U(x)=-\frac{1}{2} \langle B x,x\rangle-\langle d,x\rangle
\end{equation}
we can write instead of Equation~\eqref{eq:ddotc} the following
Equation:
\begin{equation}
 \ddot{c}(t)=-\grad \,U\left(c(t)\right)\,.
\end{equation}
It follows that the function $$
f(t):=
\left\|\dot{c}(t)\right\|^2-\frac{1}{2} \langle B c(t),c(t)\rangle-
\langle d,c(t)\rangle $$
is constant for any solution of Equation~\eqref{eq:ddotc}.

\begin{proof}
Let 
\begin{equation}
\label{eq:cs2}
\cas(t)=\Aas c(t)+\bas=
\viertel\left\{
c(t-s)+2 c(t)+c(t+s)
\right\}
\,.
\end{equation}
For $s=0$ we obtain
$
c(t)=c_{0}(t)=A(0)c(t)+b(0)
$
for all $t \in \R,$ or rather
$
\left(A(0)-\Identity\right)(c(t))=-b(0)
$
for all $t.$ We conclude that
\be
\label{eq:a02}
\left(A(0)-\Identity\right)\left(c^{(k)}(t)\right)=0
\ee
for all $k\ge 1.$
Since for some $t_0$ the vectors
$\dot{c}(t_0),\ddot{c}(t_0),\ldots,c^{(n)}(t_0)$
are linearly independent by assumption we conclude
from 
Equation~\eqref{eq:a02}:
$A(0)=\Identity,b(0)=0.$
Equation~\eqref{eq:cs2} implies for $k\ge 1:$
\ben
\as c^{(k)} (t)=
\viertel\left\{
c^{(k)}(t-s)+2c^{(k)}(t)+c^{(k)}(t+s)
\right\}\,.
\een
and hence
\ben
A'(s)c^{(k)}(t)=-\viertel 
\left\{
c^{(k+1)}(t-s)-c^{(k+1)}(t+s)
\right\}
\een
or rather
\ben
A'(0)c^{(k)}(t)=0\,.
\een
Since for some $t_0$ the vectors $\dot{c}(t_0),\ldots,c^{(n)}(t_0)$
are linearly independent we obtain $A'(0)=0$ 
and hence
by Equation~\eqref{eq:cs2}: $b'(0)=0.$

We conclude from Equation~\eqref{eq:cs2}:
\begin{eqnarray*}
\frac{\partial^2 \cas (t)}{\partial s^2}&=&A''(s)c(t)+b''(s)\\
&=&
\frac{\partial^2 \cas (t)}{\partial t^2}-
\halb \, \ddot{c}(t)=
\left(\as-\Identity/2\right)\ddot{c}(t)\,.
\end{eqnarray*}
Since $A(0)=\Identity$
the endomorphisms $\as-\Identity/2$ are isomorphisms for
all $s \in (0,\epsilon)$ for a sufficiently small
$\epsilon>0\,.$ Hence we obtain for 
$s \in (0,\epsilon):$
\begin{equation}
\label{eq:odecs}
\ddot{c}(t)=\left(\as-\Identity/2\right)^{-1}A''(s)c(t)
+ 
\left(\as-\Identity/2\right)^{-1}b''(s)\,.
\end{equation}
Differentiating with respect to $s:$
\ben
\left(
\asminus
A''(s)\right)'\left(c(t)\right)
+
\left(
\asminus
b''(s)\right)'
=0
\een
and differentiating with respect to $t:$
\ben
\left(
\asminus
A''(s)\right)'c^{(k)}(t)=0\,;\,k=0,1,2,\ldots,n\,.
\een
Since 
$\dot{c}(t_0),\ddot{c}(t_0),\ldots,c^{(n)}(t_0)$ 
are linearly independent by assumption we conclude
that 
$$\left(
\asminus
A''(s)\right)'=0\,.$$ 
Hence with $B=2A''(0)
, d=2 b''(0)$ 
we obtain 
\begin{equation*}
A''(s)=
\asa B
\,;\,b''(s)=\asa (d)
\,,
\end{equation*}
i.e.~the differential 
Equations~\eqref{eq:a2s}.
For $s=0$ we obtain from Equation~\eqref{eq:odecs}:
\ben
\ddot{c}(t)=B c(t)+d\,,
\een
i.e.~Equation~\eqref{eq:ddotc}.
\end{proof}
We can combine the results of Theorem~\ref{thm:characterization}
and Proposition~\ref{pro:odeb} to give the following 
characterization of affinely invariant curves under the
affine mapping $\Ta$ as solutions of an inhomogeneous 
linear differential equation of second order:
\begin{theorem}[\sc characterization
of affinely invariant curves as solutions of an ode]

\label{thm:main}

(a) Let $c: \R \longrightarrow \R^n$ be a smooth curve
affinely invariant under the mapping $\Ta.$ 
Assume in addition that for some $t_0\in \R$ the 
vectors $\dot{c}(t_0),\ddot{c}(t_0),\ldots,c^{(n)}(t_0)$ 
are linearly independent. Then there is a unique 
matrix $B \in \mnr$ and a unique vector
$d \in \R^n$ such that $$\ddot{c}(t)=B c(t)+d.$$

\smallskip

(b) Let $B$ be a real matrix and $d$ a vector in $\R^n.$ Then 
any solution $c=c(t)$ of the inhomogenous linear differential
equation $\ddot{c}(t)=B c(t)+d$
with constant coefficients
defines an affinely invariant smooth curve
under the mapping $T.$
\end{theorem}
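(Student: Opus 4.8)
The plan is to read off (a) from Theorem~\ref{thm:characterization} and to prove (b) by a uniform argument that reproves Proposition~\ref{pro:odeb} without its case distinction; the statement itself only repackages these two results, so the work lies in checking that their hypotheses are met and that no pair $(B,d)$ is left out.

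For (a), I would first confirm that the family $s\mapsto(\as,\bs)$ supplied by Definition~\ref{def:selfsimilar}(b) is smooth in $s$, which is what Theorem~\ref{thm:characterization} requires. This is automatic: differentiating $\cas(t)=\as c(t)+\bs$ in $t$ gives $\as c^{(k)}(t)=\viertel\{c^{(k)}(t-s)+2c^{(k)}(t)+c^{(k)}(t+s)\}$ for $k\ge1$, so evaluating at $t_0$---where $\dot c(t_0),\ldots,c^{(n)}(t_0)$ are a basis of $\R^n$---pins down $\as$ on that basis by averages of the smooth functions $c^{(k)}$ at shifted arguments, whence $\as$, and then $\bs=\cas(t_0)-\as c(t_0)$, depend smoothly on $s$. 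Theorem~\ref{thm:characterization} now yields $\ddot c=Bc+d$ with $B=2A''(0),\,d=2b''(0)$. Uniqueness of $(B,d)$ is forced by the same independence: if $Bc(t)+d=B'c(t)+d'$ for all $t$, then $(B-B')c^{(k)}(t)=0$ for $k\ge1$, and evaluating at $t_0$ gives $B=B'$ and then $d=d'$.

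For (b) I would avoid sorting $(B,d)$ into the three cases of Proposition~\ref{pro:odeb} and argue uniformly. The key remark is that if $c$ solves $\ddot c=Bc+d$ then both $\dot c$ and $\ddot c$ solve the homogeneous equation $\ddot y=By$ (differentiate the equation once, resp.\ twice, in $t$), whereas the averaged curve $\cas$ solves the same inhomogeneous equation, since applying $\viertel\{\,\cdot\,(t-s)+2\,\cdot\,(t)+\,\cdot\,(t+s)\}$ to $\ddot c=Bc+d$ gives $\ddot{c}_s=B\cas+d$. Put $\as:=\halb\{\Identity+\coA(s)\}$, which commutes with $B$. Applying Proposition~\ref{pro:ode} to the homogeneous solutions $\dot c,\ddot c$ and using that $\coA$ is even while $\siA$ is odd, the odd parts cancel in the symmetric averages, giving $\dotcas(0)=\as\dot c(0)$ and $B\cas(0)=\as(Bc(0)+d)-d$. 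Defining $\bs:=\cas(0)-\as c(0)$, the second identity yields exactly $B\bs=(\as-\Identity)d=\halb(\coA(s)-\Identity)d$, which is the compatibility making $\tc(t):=\as c(t)+\bs$ a solution of the inhomogeneous equation as well, because $\ddot{\tc}=\as(Bc+d)=B\as c+\as d$.

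Then $h:=\cas-\tc$ solves the homogeneous equation with $h(0)=0$ by the definition of $\bs$ and $\dot h(0)=\dotcas(0)-\as\dot c(0)=0$ by the first identity, so $h\equiv0$ by uniqueness for the linear second-order system. Hence $\cas(t)=\as c(t)+\bs$ for all $t$, with $\as=\halb\{\Identity+\coA(s)\}$ as in~\eqref{eq:A(s)}; since $A(0)=\Identity$, the $\as$ are invertible for $s$ near $0$, which is affine invariance in the sense of Definition~\ref{def:selfsimilar}(b). The step I expect to be the real crux is the compatibility between the two natural prescriptions for $\bs$---the one making $\tc$ solve the inhomogeneous equation and the one making $h(0)=0$---and it is precisely the fact that $\ddot c$ itself solves the homogeneous equation that forces them to agree; this is what lets a single argument replace the block-by-block analysis built on the Jordan form.
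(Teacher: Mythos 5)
Your proposal is correct, and for part (b) it takes a genuinely different route from the paper. For part (a) you do what the paper does --- invoke Theorem~\ref{thm:characterization} --- but you add two things the paper leaves implicit: the verification that the family $s\mapsto(\as,\bs)$ from Definition~\ref{def:selfsimilar}(b) is automatically smooth (needed because that definition does not assume smoothness, while Theorem~\ref{thm:characterization} does), and the derivation of uniqueness of $(B,d)$ from the linear independence of $\dot{c}(t_0),\ldots,c^{(n)}(t_0)$; both arguments are correct. For part (b) the paper's proof is Proposition~\ref{pro:odeb}, which first reduces $B$ to Jordan normal form and then splits into the cases $d=B\,d_*$, a nilpotent Jordan block, and $B=0$, producing in each case an explicit formula for $\bs$. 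Your argument replaces this block-by-block analysis by a uniform one: set $\as=\frac{1}{2}\{\Identity+\coA(s)\}$ and $\bs:=\cas(0)-\as c(0)$, observe that $\dot{c}$ and $\ddot{c}$ solve the homogeneous equation while $\cas$ solves the same inhomogeneous equation, deduce from the even/odd symmetry of $\coA,\siA$ the identities $\dotcas(0)=\as\dot{c}(0)$ and $B\cas(0)=\as\left(Bc(0)+d\right)-d$, hence the compatibility $B\bs=\left(\as-\Identity\right)d$ (using that $\as$, being a power series in $B$, commutes with $B$), and finish by uniqueness for the initial value problem applied to $h=\cas-\widetilde{c}$ with $\widetilde{c}(t)=\as c(t)+\bs$; invertibility of $\as$ for small $s$ follows from $A(0)=\Identity$. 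I checked each of these identities and they hold. What your route buys is brevity and uniformity --- no Jordan forms, no case distinction, and the existence of $\bs$ for every pair $(B,d)$ at once; what the paper's route buys is the closed-form expressions for $\bs$ (such as $\bs=\frac{1}{2}\{\coA(s)-\Identity\}(d_*)$ and the $c_*$ correction in the nilpotent case), which are used later in Remark~\ref{rem:wave} and in the planar examples of Section~\ref{sec:planar}, so the two proofs serve complementary purposes.
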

For a closed polygon $x \in \poN$ the {\em center of mass} $x_{cm}$
is given by
\begin{equation}
x_{cm}=\frac{1}{N}\sum_{j=1}^N x_j\,.
\end{equation}
Since 
$\left(T(x)\right)_{cm}=x_{cm}$ we conclude:
There is no translation-invariant closed polygon, 
since the center of mass is preserved under
the curve shortening processes $B$ 
or rather
$T.$
\begin{remark}[\sc Generalization of the map $T$]
\label{rem:talpha}

\rm
For three points $x,y,z \in \R^n$ define the
affine map 
$$\T: \R^n\times \R^n \times \R^n \longrightarrow \R^n\,;\,
\T(x,y,z)=\frac{1}{4}\left\{x+2y+z\right\}\,.$$
Hence the mapping 
$T: \po (\R^n)\longrightarrow\po (\R^n)$ 
introduced in Definition~\ref{def:selfsimilar}
satisfies for all $j \in \z:$
$$
T(x)_j=\T\left(x_{j-1},x_j,x_{j+1}\right)\,.
$$
The one-parameter family $c_s$ associated to
the smooth curve $c$ by 
Equation~\eqref{eq:oneparameter}
can be written as:
\ben
c_s(t):=\T\left(c(t-s),c(t),c(t+s)\right)=
\frac{1}{4}\left\{c(t-s)+2 c(t)+c(t+s)\right\}\,.
\een
In the following we will allow a slightly more general
curve shortening process 
\be
\label{eq:talpha}
T_{\alpha}: \po (\R^n)\longrightarrow
\po(\R^n),\left( T_{\alpha}(x)\right)_j:=\T_{\alpha}\left(
x_{j-1},x_j,x_{j+1}\right)
\ee
based on the affine
map
$\T_{\alpha}:\R^n \longrightarrow \R^n\,;
\T_{\alpha} (x,y,z)=\alpha x+(1-2\alpha)y+\alpha z$
for $\alpha \not=0,$ i.e.~$\T=\T_{1/4}.$
For $\alpha=1/3$ the point $\T_{1/3}(x,y,z)$ is the 
{\em center of mass}
$(x+y+z)/3.$
The curve $\alpha \in[0,1/2]\mapsto 
\T_{\alpha}\left(x_{j-1},x_j,x_{j+1}\right)
\in \R^n$
is a parametrization of the 
straight line connecting $x_j$ with the 
midpoint $(x_{j-1}+x_{j+1})/2$
of the points $x_{j-1},x_{j+1}.$
These mappings $T_{\alpha}$ are considered for example 
in~\cite[p.238-39]{Berlekamp1965} and
\cite[ch.5.1]{Bruckstein1997}.
For a smooth curve $c$ one defines the associated
one-parameter family of curves
\ben
c_{\al,s}(t)
=\T_{\alpha}\left(x(t-s),c(t),c(t+s)\right)=
\alpha c(t-s)+\left(1-2\alpha\right)c(t)
+\alpha c(t+s)\,.
\een
We call a smooth curve $c$ {\em affinely invariant}
(or a {\em soliton}) with
respect to $T_{\alpha}$ if there is a one-parameter
family $\left(A_{\alpha}(s),b_{\alpha}(s)\right), s
\in(-\epsilon,\epsilon)$ for some $\epsilon>0$
of affine mappings such that
$
c_{\alpha,s}(t)=A_{\alpha}(s)c(t)+b_{\alpha}(s)
$
for all $t\in \R, s\in (-\epsilon,\epsilon).$
Then
\begin{eqnarray*}
c_s(t)=c_{1/4,s}(t)=c(t)+\viertel\left\{c(t-s)-2c(t)+c(t+s)
\right\}\\
=\frac{(4\alpha-1)c(t)+c_{\alpha,s}(t)}{4\alpha}
\end{eqnarray*}
and
\ben
c_{\alpha,s}(t)=4\alpha c_s(t)-(4\alpha-1)c(t)\,.
\een
We conclude:
A smooth curve $c$ is a soliton
for the transformation $T_{\alpha}$ for $\alpha\not=0$ if
and only if it is a soliton for the transformation
$T=T_{1/4}$ and for the corresponding affine maps
$\left(A_{\alpha}(s),b_{\alpha}(s)\right) $
we obtain:
$
A_{\alpha}(s)=4\alpha \as-(4\alpha-1)\Identity\,;\,
b_{\alpha}(s)=4\alpha \bs\,.
$
\end{remark}
\section{Semidiscrete flows of polygons}
\label{sec:semidiscrete}
The mapping $T$ introduced in 
Definition~\ref{def:selfsimilar} or $T_{\alpha}$ defined in
Remark~\ref{rem:talpha}
or Equation~\eqref{eq:talpha} can be seen as a
discrete version of the {\em semidiscrete flow}
defined on the space $\po(\R^n)$ of polygons:
For a given polygon $(x_j)_{j\in \z}$ the flow
$s \mapsto \left(x_j(s)\right)\in \po(\R^n)$ is defined by
the Equation
\be
\label{eq:semi}
\frac{d x_j(s)}{d s}=
x_{j-1}(s)-2 x_j(s)+ x_{j+1}(s)\,,\, x_j(0)=x_j\,.
\ee
This flow is discussed for example in~\cite{Chow2007}.
It is a linear first order  system of differential equations
with constant coefficients forming a circulant matrix. 
Hence one can write down the solutions explicitely.
If we approximate the left hand side of this
equation by
$\left(x_{j}(s+\alpha)-x_{j}(s)\right)/\alpha$
we obtain 
\begin{eqnarray*}
x_j(s+\alpha)
=\alpha x_{j-1}(s)+
\left(1-2\alpha\right)x_j(s)
+\alpha x_{j+1}(s)
\\
=
\T_{\alpha}\left(x_{j-1}(s), x_j(s),x_{j+1}(s)\right)\,.
\end{eqnarray*}
Therefore  the mappings $T$ and $T_{\alpha}$ can be
seen as discrete versions of the 
flow Equation~\eqref{eq:semi}.
In~\cite{Chow2007}
the flow $s \mapsto x_j(s)$ introduced in
Equation~\eqref{eq:semi} is called {\em semidiscrete}
since it is a smooth flow on a space of discrete
objects (polygons). Then a discretization of
the semidiscrete flow yields to the discrete
process $T$ and $T_{\alpha}$ discussed here.
On the other hand the connection of the semidiscrete
flow with the {\em smooth} curve shortening
flow in Euclidean space is discussed in detail in
\cite[Sec. 5]{Chow2007}.

If we consider the  functional
$$F_2: \poN \longrightarrow \R\,;\,
F_2(x)=\halb \sum_{j=0}^N \left\|x_{j+1}-x_j\right\|^2$$
on the space $\poN$ of closed polygons
then we obtain for a curve $s
\in (-\epsilon,\epsilon) \mapsto 
x(s)=\left(x_j(s)\right)_{j\in \z_{N}}$
with $x=x(0)$ and $\dot{x}=\dot{x}(0):$
\ben
\left.\frac{d F_2(x(s))}{ds}\right|_{s=0}=
-\sum_{j=0}^N \left\langle \dot{x}_j,x_{j-1}-2x_j+x_{j+1}\right\rangle
\een
and we obtain for the gradient $\grad F_2(x):$
\ben
\grad F_2(x)=-\left(x_{j-1}-2x_j+x_{j+1}\right)_{j\in \z_N}\,.
\een
Hence the semidiscrete flow can be viewed as the 
negative gradient flow of
the functional $F_2,$ cf.~\cite[Sec.6]{Chow2007}.
An affine transformation $x\in \rn \mapsto \mathcal{A}(x)=A(x)+b \in \rn$
on $\rn$ induces an affine transformation 
$\widehat{\mathcal{A}}$
on
$\poN: \widehat{\mathcal{A}}=
\left(x_1,\ldots,x_n\right)=
\left(\mathcal{A}\left(x_1\right),\ldots, \mathcal{A}\left(x_N\right)\right).$
In contrast to the functional $F_2$ its gradient
$\grad F_2$ is invariant under
$\widehat{\mathcal{A}}: $
$$ \grad F_2\left( 
\widehat{\mathcal{A}}\left(x\right)\right)=
-\left(A\left(x_{j-1}\right)-2
A\left(x_j\right)+A\left(x_{j+1}\right)\right)_{j\in \z_N}=
\widehat{\mathcal{A}}\left(\grad F_2 \left(x\right)\right)\,.
$$

\begin{definition}[\sc affine invariance under the semidiscrete flow]

\label{def:semi}
We call a smooth curve $c:\R \rightarrow \R^n$
{\em affinely invariant} 
under the semidiscrete flow given by Equation~\eqref{eq:semi} 
(or a {\em soliton}) if there is some $\epsilon>0$ such that 
for any $s\in(0,\epsilon)$ there is an affine 
map $\left(\widetilde{A}(s),\widetilde{b}(s)\right)$ 
such that the one-parameter family
\be
\widetilde{c}_s(t)
=
\widetilde{A}(s)c(t)+\widetilde{b}(s)
\ee
is a solution of the flow equation
\ben
\frac{\partial \widetilde{c}_s(t)}{\partial s}=
\widetilde{c}_s(t-1)-2 \widetilde{c}_s(t)+
\widetilde{c}_s(t+1)
\een
for all $s\in (0,\epsilon),t\in \R\,.$
\end{definition}
In the following Proposition we
show that the solitons of the
mapping $T$ or $T_{\alpha}$ coincide with
the solitons of the semidiscrete flow given by Equation~\eqref{eq:semi}:
\begin{proposition}[\sc affinely 
invariant curves under the semidiscrete flow]
\label{pro:semidiscrete}
Let $B$ be a matrix and $d$ a vector in $\R^n$ and
let 
$c=c(t)$ be a solution of the inhomogenous linear differential
equation $\ddot{c}(t)=B c(t)+d$
with constant coefficients 
for which 
$\dot{c}(0),\ddot{c}(0),\ldots,c^{(n)}(0)$
are linearly independent.
Then the curve $c$
defines an affinely invariant smooth curve
under the semidiscrete flow
given in Equation~\eqref{eq:semi}\,.
\end{proposition}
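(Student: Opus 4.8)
The plan is to construct the affine family $(\widetilde{A}(s),\widetilde{b}(s))$ of Definition~\ref{def:semi} explicitly rather than to solve the flow from scratch. The whole problem reduces to understanding how the ``unit'' discrete Laplacian $c(t-1)-2c(t)+c(t+1)$ acts on a solution $c$ of $\ddot{c}=Bc+d$: once this combination is seen to be an affine function of $c(t)$ with an $s$\nobreakdash-independent matrix, the semidiscrete soliton is obtained by solving a single linear constant-coefficient ordinary differential equation in the evolution variable $s$.

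First I would record the required identity using Proposition~\ref{pro:odeb}. Applied with step $s$, it gives $\viertel\{c(t-s)+2c(t)+c(t+s)\}=\as c(t)+\bs$ with $\as=\halb\{\Identity+\coA(s)\}$. Multiplying by $4$ and subtracting $4c(t)$ yields
\ben
c(t-s)-2c(t)+c(t+s)=2\left(\coA(s)-\Identity\right)c(t)+4\,\bs\,.
\een
Specializing to $s=1$ and abbreviating $L:=2\left(\coA(1)-\Identity\right)$ and $\beta:=4\,b(1)$, this reads
\ben
c(t-1)-2c(t)+c(t+1)=L\,c(t)+\beta\,,
\een
where $L$ and $\beta$ are constants built from $B$ and $d$.

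Next I would set $\widetilde{A}(s):=\exp(sL)$ and $\widetilde{b}(s):=\int_0^s \exp(\sigma L)\,\beta\,d\sigma$, so that $\widetilde{A}(0)=\Identity$, $\widetilde{b}(0)=0$, and each $\widetilde{A}(s)\in Gl(n,\R)$ since a matrix exponential is always invertible; thus $\widetilde{c}_s(t):=\widetilde{A}(s)c(t)+\widetilde{b}(s)$ is defined for all $s\in\R$ and satisfies $\widetilde{c}_0=c$. It then remains only to verify the flow equation. Because the additive constant cancels ($1-2+1=0$), the right-hand side is
\ben
\widetilde{c}_s(t-1)-2\widetilde{c}_s(t)+\widetilde{c}_s(t+1)=\widetilde{A}(s)\left(c(t-1)-2c(t)+c(t+1)\right)=\widetilde{A}(s)\,L\,c(t)+\widetilde{A}(s)\,\beta\,,
\een
while the left-hand side is $\frac{\partial \widetilde{c}_s(t)}{\partial s}=\widetilde{A}'(s)c(t)+\widetilde{b}'(s)=\widetilde{A}(s)L\,c(t)+\widetilde{A}(s)\beta$ by the defining relations $\widetilde{A}'(s)=\widetilde{A}(s)L$ and $\widetilde{b}'(s)=\widetilde{A}(s)\beta$. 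The two expressions agree for all $t$ and all $s$, which is exactly the required affine invariance under the semidiscrete flow.

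I do not expect a genuine obstacle here: the one potentially delicate point is recognizing that the step-$1$ discrete Laplacian reproduces $c$ up to the \emph{same} fixed affine map for every $t$, and this is precisely the soliton property of $c$ under $T$ supplied by Proposition~\ref{pro:odeb}. I would also note that the linear independence of $\dot{c}(0),\ddot{c}(0),\ldots,c^{(n)}(0)$ is not actually used in this direction, since the construction of $(\widetilde{A}(s),\widetilde{b}(s))$ is explicit; that hypothesis is needed only for the converse uniqueness statements of Theorem~\ref{thm:characterization} and Theorem~\ref{thm:main}(a).
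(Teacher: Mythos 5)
Your proposal is correct and takes essentially the same route as the paper: both first use Proposition~\ref{pro:odeb} to write the step-one discrete Laplacian as a fixed affine function of $c(t)$, namely $c(t-1)-2c(t)+c(t+1)=L\,c(t)+\beta$, and then set $\widetilde{A}(s)=\exp(sL)$, $\widetilde{b}(s)=\int_0^s\exp(\sigma L)\beta\,d\sigma$ and verify the flow equation via commutativity of $L$ with its exponential and the cancellation of constants under $1-2+1=0$. As a minor point in your favor, your constants $L=2\left(\coA(1)-\Identity\right)$ and $\beta=4\,b(1)$ are the correct ones (the paper's stated $A_1=(A(1)-\Identity)/4$, $b_1=b(1)/4$ contain a slip of a factor $16$), and your observation that the linear-independence hypothesis is not needed for this direction is consistent with the paper's own argument, which likewise never invokes it.
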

\begin{proof}
We conclude from 
Proposition~\ref{pro:odeb}
and Equation~\eqref{eq:csA} that there is 
a matrix $A_1=(A(1)-\Identity)/4$ and a vector $b_1=b(1)/4$
such that
\be
\label{eq:cteins}
c(t-1)-2c(t)+c(t+1)=A_1 c(t)+b_1
\ee
for all $t \in \R.$
Let
\ben
\widetilde{A}(s)=\exp(A_1\cdot s)\,;\,
\widetilde{b}(s)=\int_0^s \exp(A_1\cdot \sigma)(b_1)\,d\sigma\,,
\een
and
\be
\label{eq:cstw}
\widetilde{c}_s(t)= \widetilde{A}(s) c(t)+\widetilde{b}(s)\,.
\ee
Using $A_1 \cdot \widetilde{A}(s)=
\widetilde{A}(s) \cdot A_1$ we obtain:
\begin{eqnarray*}
\frac{\partial \widetilde{c}_s(t)}{\partial s} 
=\widetilde{A}'(s) c(t)+\widetilde{b}'(s)=
A_1 \cdot \widetilde{A}(s) c(t)+
\widetilde{A}(s)(b_1)\\
=\widetilde{A}(s)\left(A_1c(t)+b_1\right)\,.
\end{eqnarray*}
We conclude from Equation~\eqref{eq:cteins}
and Equation~\eqref{eq:cstw}:
\ben
\frac{\partial \widetilde{c}_s(t)}{\partial s} 
=\widetilde{A}(s)\left(
c(t-1)-2c(t)+c(t+1)
\right)=
\widetilde{c}_s(t-1)-2 \widetilde{c}_s(t)+
\widetilde{c}_s(t+1)\,.
\een
Hence the curve $c$ is affinely invariant
under the semidiscrete flow.
\end{proof}
\section{Planar solitons }
\label{sec:planar}
\begin{figure}[t]
  \centering
  \begin{minipage}[b]{6 cm}
      \includegraphics[width=0.85\textwidth]{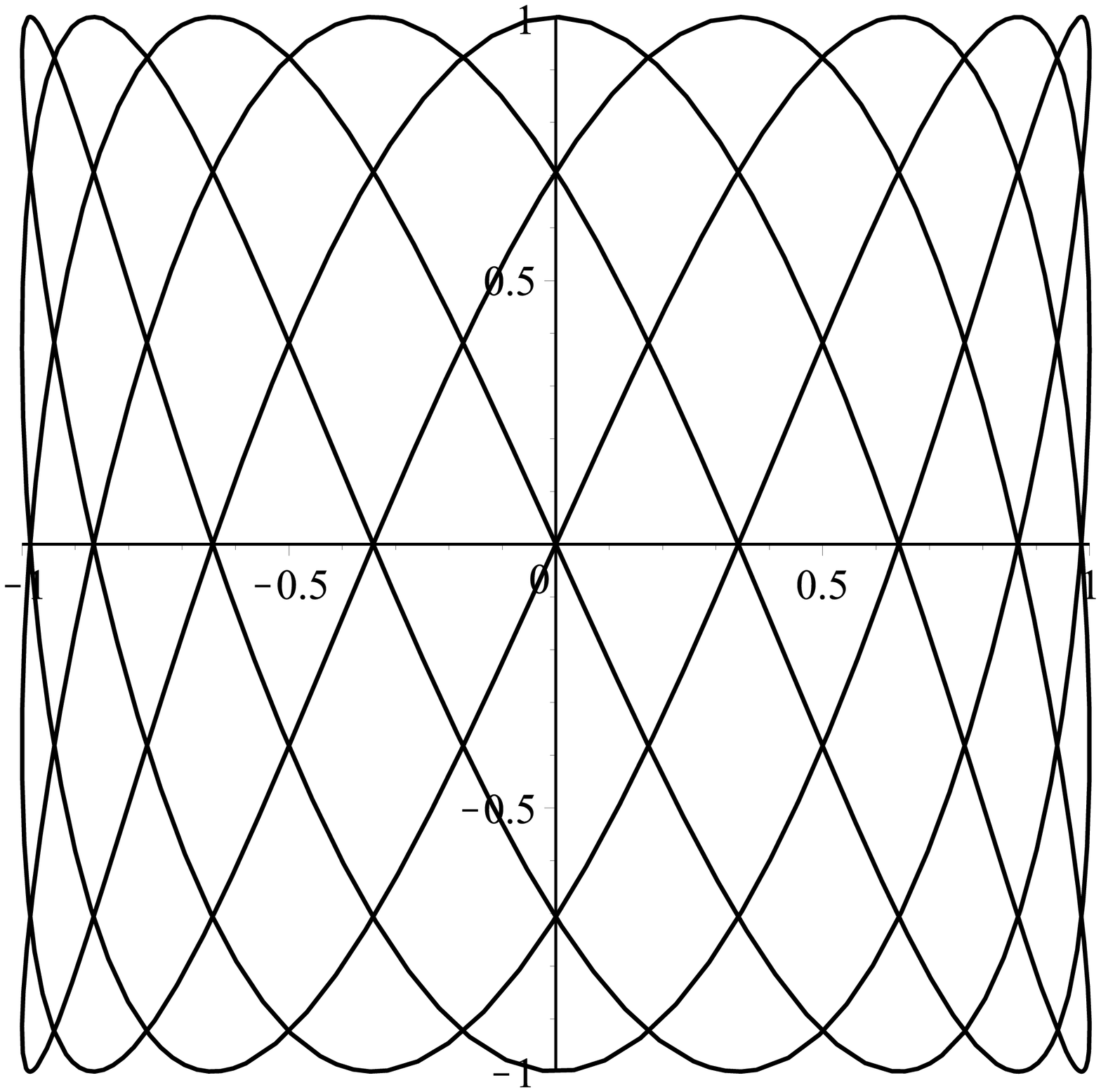} 
      \caption{Scaling, Case (1a)}
      \label{fig:lissajous}
    \end{minipage}
     \begin{minipage}[b]{6 cm}
          \includegraphics[width=0.85\textwidth]{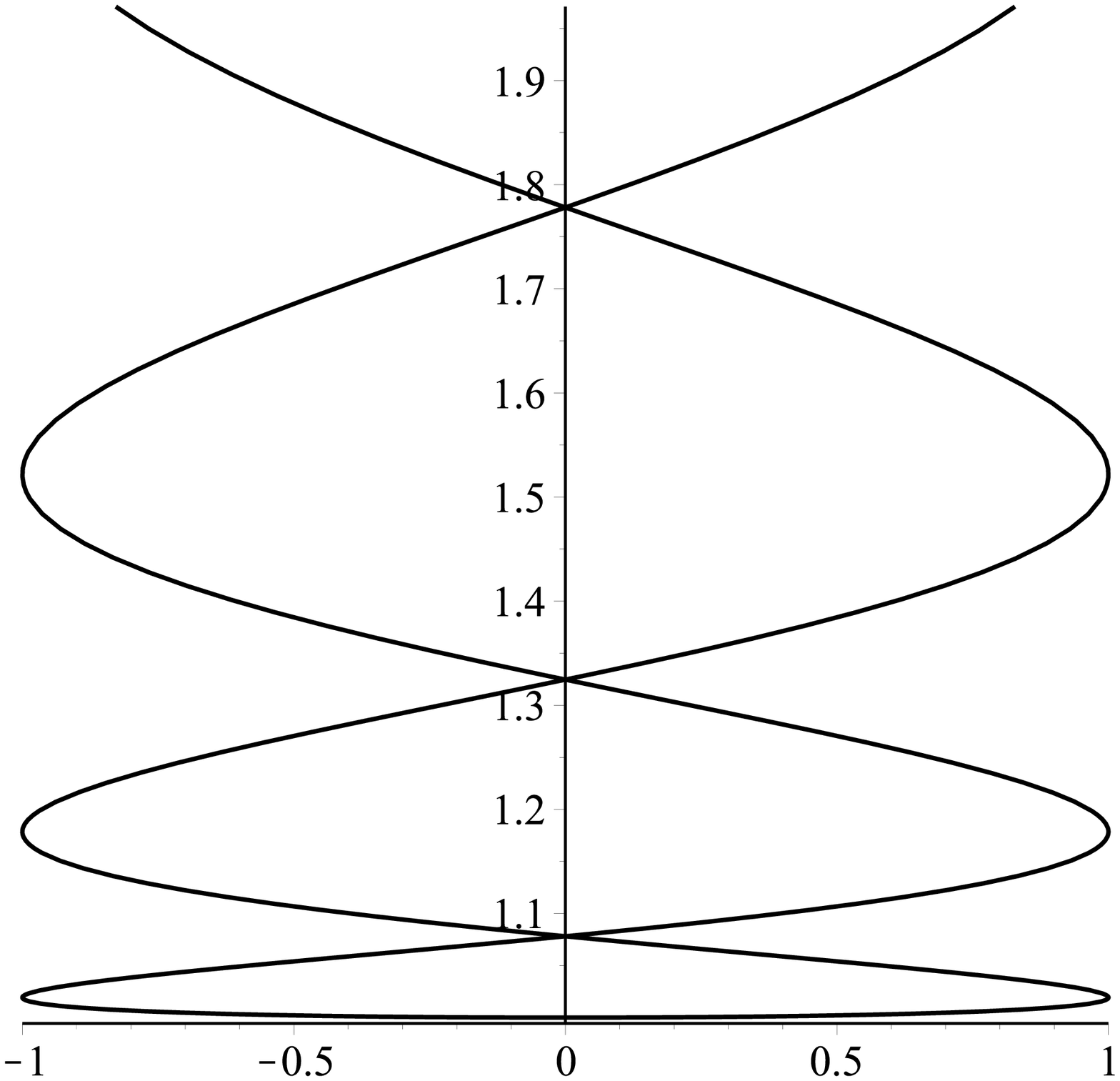}  
          \caption{Scaling, Case (1b)}
          \label{fig:spiral6}
        \end{minipage}
           \end{figure}

We study the planar case $n=2.$
We conclude from Theorem~\ref{thm:main} that 
solitons $c$ are 
solutions $c(t)=\left(x(t),y(t)\right)$
of the differential equation
$$\ddot{c}(t)=B c(t)+d\,.$$
We discuss these solutions using 
Proposition~\ref{pro:ode} and
Proposition~\ref{pro:odeb}.

Let $B\in \mtwor$ be a matrix in
(real) Jordan normal form. Then
we consider the following
cases:

\begin{enumerate}[leftmargin=0.6cm]
\item
$B$ is diagonalizable (over $\R$)
and invertible, and $d=0.$ 
i.e.
\ben
B=
\left(
\begin{array}{cc}
b_1 &0\\
0&b_2
\end{array}
\right)\,,\, b_1,b_2\in \R-\{0\}\,,
\een
this affine map is called {\em scaling.}
If the diagonal entries coincide the transformation is also
called a {\em homothety.}
The differential equation $\ddot{c}=B c$
implies $\ddot{x}=b_1x, \ddot{y}=b_2y.$
With the notation introduced in Equation~\eqref{eq:coa}
we obtain with $c(0)=(v_1,v_2), \dot{c}(0)=(w_1,w_2):$
\ben
c(t)=\left(v_1\cos_{b_1}(t)+w_1\sin_{b_1}(t),
v_2\cos_{b_2}(t)+w_2\sin_{b_2}(t)\right)
\een
The matrices $\as$ are also diagonal matrices (or scalings)
\ben
\as= \halb \left\{\Identity+\coA(s)\right\}=
\halb
\left(
\begin{array}{cc}
1+\cos_{b_1}(s) &0\\
0&1+\cos_{b_2}(s)
\end{array}
\right)
.
\een
In particular the diagonal entries of the matrices
$\as$ are non-negative for all $s.$
Hence the one parameter family $c_s$ of curves
is produced by {\em scaling} (with a diagonal
matrix) from the soliton $c=c_0.$

Particular examples are:

\smallskip

\begin{enumerate}[leftmargin=0.4cm]
\item[(1a)] 
We obtain {\em closed curves} if $b_1=-\lambda_1^2<0,b_2
=-\lambda_2^2<0$ and 
$\lambda_1/\lambda_2 \in \mathbb{Q}.$
Then we obtain for example {\em Lissajous}
curves of the form
\ben
c(t)=\left(w_1 \sin \left(\lambda_1 t\right),
v_2 \cos\left( \lambda_2 t\right)\right)
\een
see Figure~\ref{fig:lissajous} for
$\lambda_1=4,\lambda_2=9,w_1=v_2=1, w_2=v_1=0$
and $0\le t\le 6.3.$
Another example is given in the Introduction,
it is the curve
$
c(t)=\left( \cos \left(2 t\right),
 \cos\left( 3 t\right)\right)\,.
$
In Figure~\ref{fig:polygon2} we show 
the curve $c=c(t)$ and $c_{0.4}=c_{0.4}(t)=A\left(0.4\right) c(t):$ 
\ben
c_{0.4}(t)=
\halb
\left(\left(1+\cos\left(0.8\right)\right)
\cos(2t),
\left(1+\cos\left(1.2\right)\right)
\cos(3t)
\right)\,.
\een

\item[(1b)]
If $b_1=\lambda_1^2>0, b_2=-\lambda_2^2<0$
we obtain for example
curves of the following form:
\ben
c(t)=\left(w_1\sin\left(\lambda_1 t\right),
v_2\cosh\left(\lambda_2 t\right)\right)
\een
In Figure~\ref{fig:spiral6} this curve is shown
for the parameters $
\lambda_1=8, \lambda_2=1, v_2=w_1=1, v_1=w_2=0$
and $-1.3\le t\le 1.3.$

              \begin{figure}[t]
              \centering
\begin{minipage}[b]{6 cm}
\includegraphics[width=0.9\textwidth]{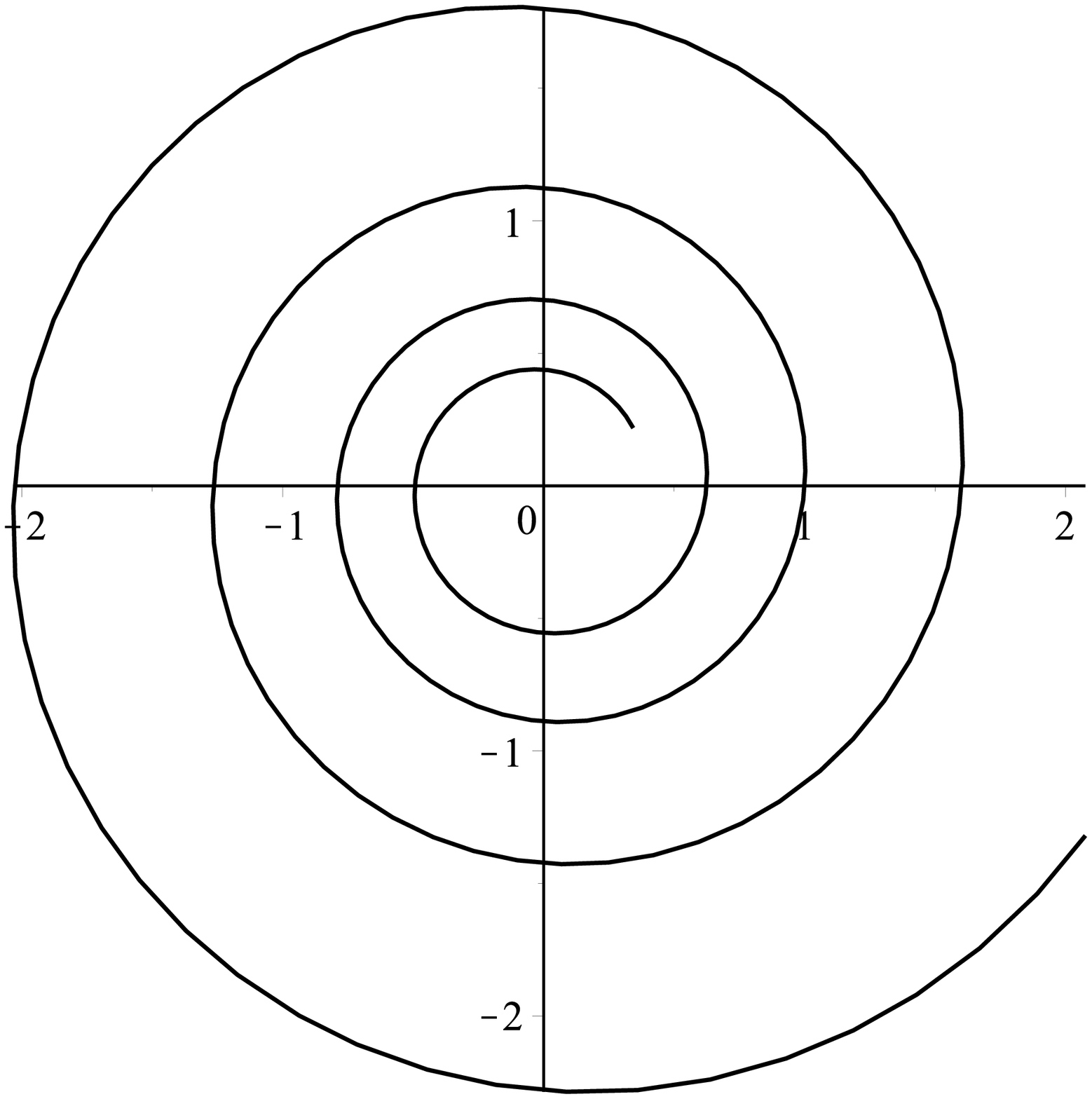}  
\caption{{\em Spira mirabilis,} 
Rotate and scale, Case (2a)}
\label{fig:mirabilis}
\end{minipage}
    \begin{minipage}[b]{6 cm}
    \includegraphics[width=0.9\textwidth]{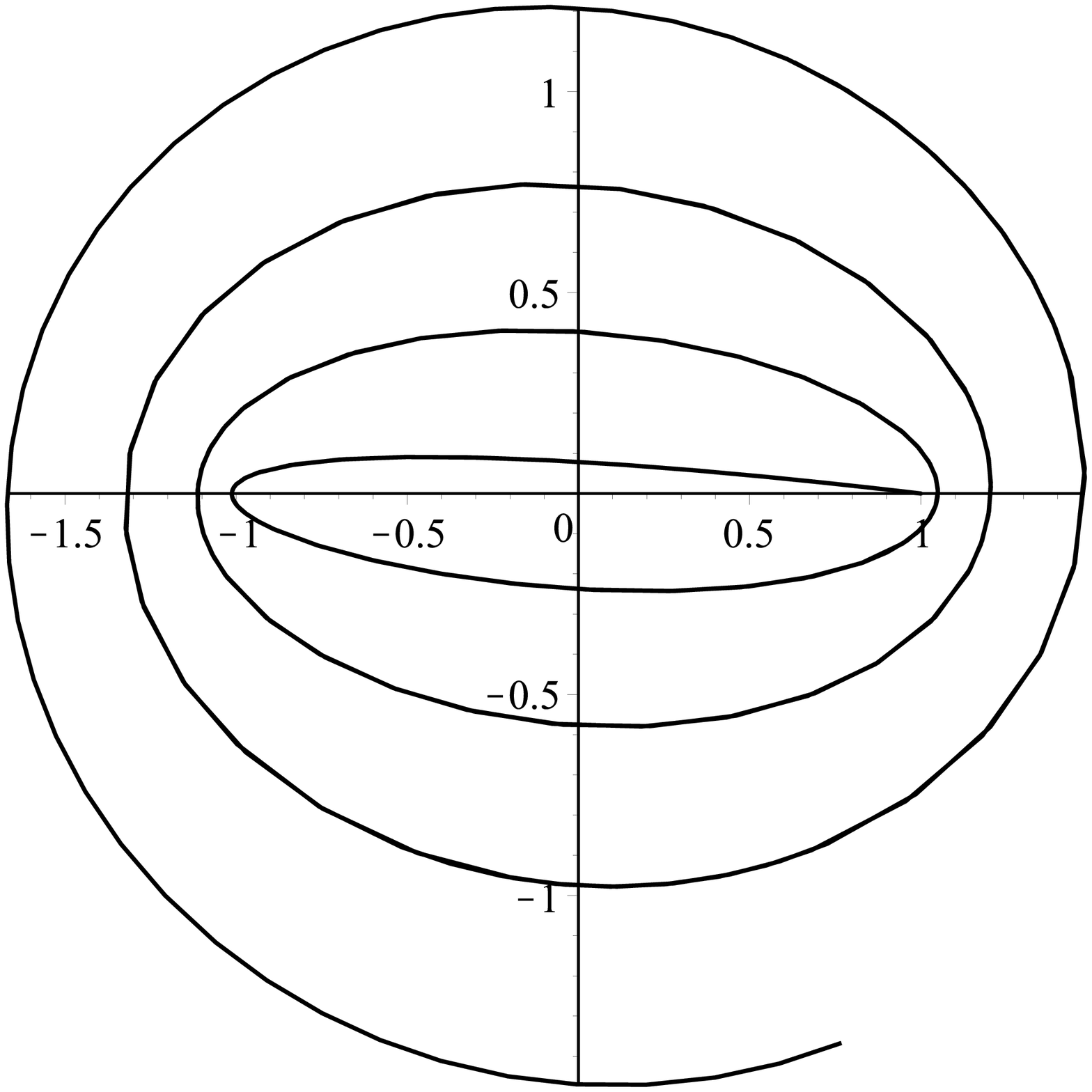} 
    \caption{Rotate and scale, Case (2b)}
    \label{fig:spiral0}
  \end{minipage}
  \end{figure}
\item[(1c)]
If $B$ is diagonalizable and has the form
\ben
A=
\left(
\begin{array}{cc}
0 &0\\
0&b_2
\end{array}
\right)\,,\,b_2 \not=0.
\een
and $d=0$
then $\ddot{x}(t)=0,\ddot{y}=b_2y,$ i.e.
$
c(t)=\left(t,v_2\cos_{b_2}(t)+w_2\sin_{b_2}(t)
\right)
$
are solutions, for example
\ben
c(t)=(t,\exp(t)), c(t)=(t,\cosh(t)), c(t)=(t,\sin(t))\,.
\een
\end{enumerate}
\item
The following case corresponds to the case that the
matrix $B$ has no real eigenvalues. Hence
$B$ is a {\em similarity,} i.e.~a composition of a {\em rotation}
and a {\em dilation} $x \mapsto \lambda x$ for some 
$\lambda \not=0.$
We identify $\R^2$ with the complex numbers $\complexnumbers$
and assume that the matrix $A$ is complex linear, i.e.~can be
identified with the multiplication with a 
non-zero complex number
$\mu$ 
Then we are looking for a solution $z:t \in\R
\mapsto z(t)=x(t)+i y(t)\in\complexnumbers$
of the differential equation
$\ddot{z}=\mu z.$ For a complex number $w$ 
with $\mu=w^2, \; w=u_1+i u_2, u_1,u_2\in \R,
u_1=\Re(w),u_2=\Im(w)$
a solution has the form
$$
z(t)= h_1\exp(wt)+h_2\exp(-wt).
$$
for $h_1,h_2\in \complexnumbers.$
If we write $h_1=h_{11}+ih_{12}, h_2=h_{21}+ih_{22}$
with $h_{11},h_{12},h_{21},h_{22} \in \R$ then
we obtain for $x(t)=\Re(z(t)), y(t)=\Im(z(t)):$
\begin{eqnarray}
x(t)&=&\left\{h_{11} \exp(u_1t)+h_{21}\exp(-u_1t)\right\}\cos(u_2t)
\nonumber
\\
&&+ \left\{-h_{12} \exp(u_1t)+h_{22}\exp(-u_1t)\right\}\sin(u_2t)
\nonumber
\\
y(t)&=&
\left\{h_{12} \exp(u_1t)+h_{22}\exp(-u_1t)\right\}\cos(u_2t)
\nonumber
\\
\label{eq:xyt}
&&+\left\{h_{11}\exp(u_1t)-h_{21}\exp(-u_1t)\right\} \sin(u_2t)
\end{eqnarray}
For arbitrary $h_1,h_2$
the one-parameter family $z_s(t)$ 
in complex notation is given by
\begin{eqnarray*}
z_s(t)&=&\viertel \left\{z(t-s)+2z(t)+z(t+s)\right\}\\
&=&
\halb\left(1+\cosh(s w)\right)
\left(h_1 \exp(w t)+h_2 \exp(-w t)\right)\,.
\end{eqnarray*}
In real notation we obtain the matrix:
\begin{equation*}
\as=\halb
\left(\begin{array}{cc}
1+\cosh(u_1s)\cos(u_2s) & - \sinh(u_1s)\sin(u_2s)\\
\sinh(u_1s)\sin(u_2s) & 1+\cosh(u_1s)\cos(u_2 s)\,.
\end{array}
\right)
\end{equation*}
  \begin{figure}[t]
  \centering
  \begin{minipage}[b]{6 cm}
    \includegraphics[width=0.9\textwidth]{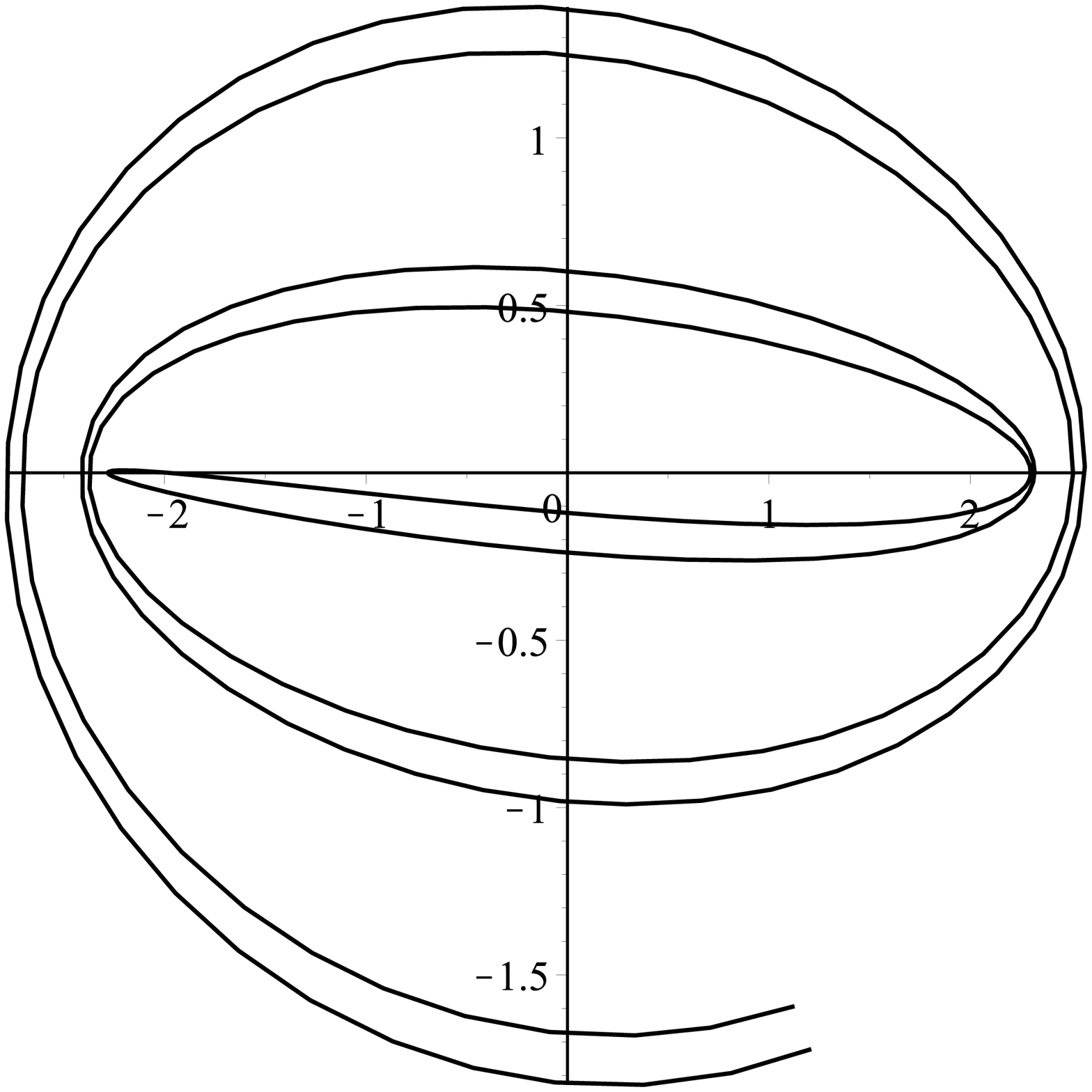}  
    \caption{Rotate and scale, Case (2c)}
    \label{fig:spiral1}
  \end{minipage}
  \begin{minipage}[b]{6 cm}
      \includegraphics[width=1.0\textwidth]{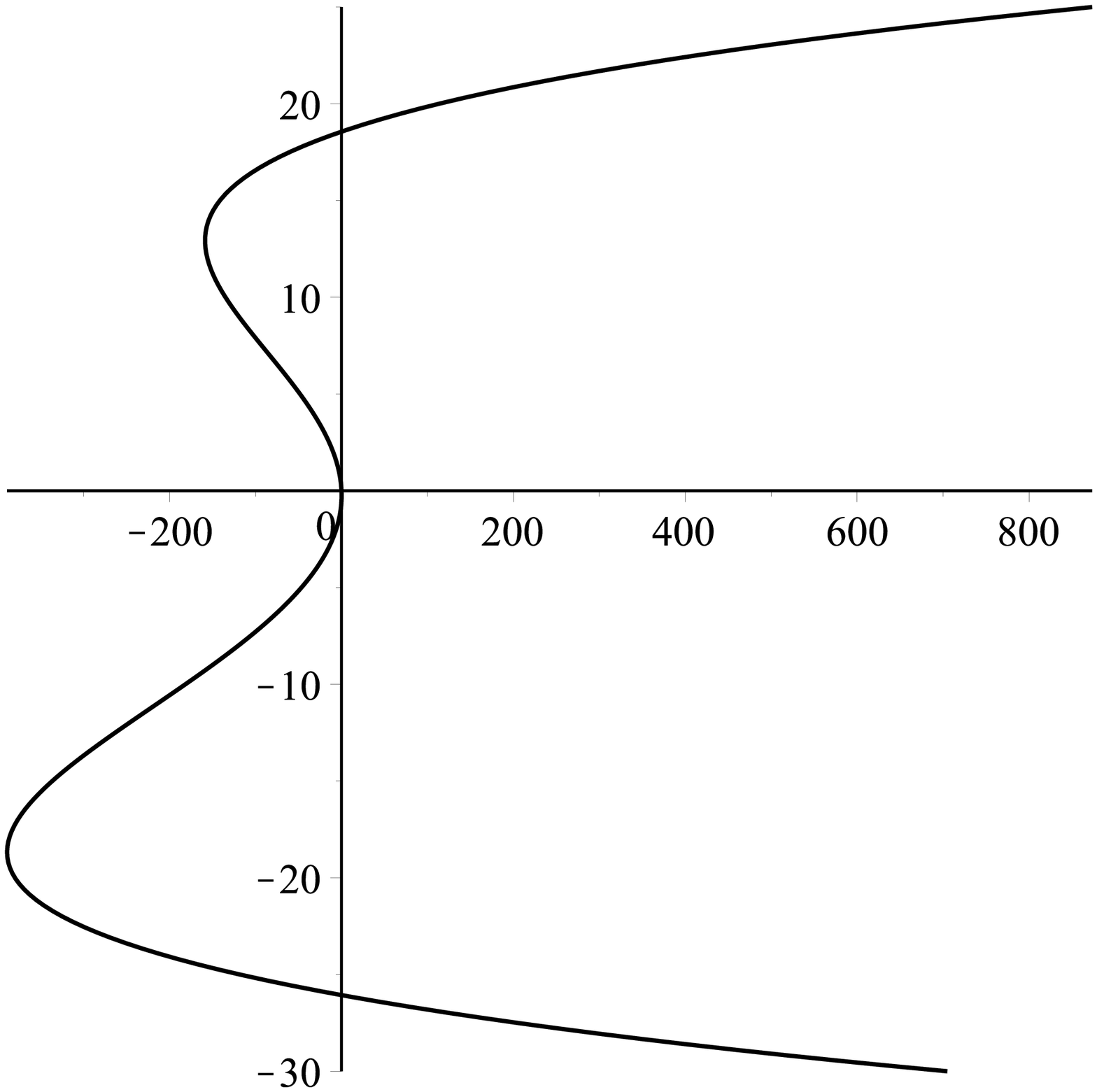} 
      \caption{Shear, Case (3)}
      \label{fig:seven}
    \end{minipage}
\end{figure}

Particular examples are:

\smallskip

\begin{enumerate}[leftmargin=0.5cm]
\item[(2a)] The logarithmic spiral ({\em spira mirabilis}): \ben
c(t)=\exp(u_1t)\left(\cos(u_2t),\sin(u_2t)\right)\,.
\een
i.e.~$h_{11}=1,h_{12}=h_{21}=h_{22}=0.$
Figure~\ref{fig:mirabilis} shows this curve 
for $u_1=0.3, u_2=4$ and $-3\le t \le 3.$
\item[(2b)]
The curve
\ben
\label{eq:spiral0}
c(t)=\left( \cosh(u_1t) \cos(u_2t),
\sinh (u_1t) \sin(u_2t)
\right)
\een
(i.e.~$h_{11}=h_{21}=1/2, h_{12}=h_{22}=0$)
is shown in Figure~\ref{fig:spiral0}
for the values $u_1=1,u_2=20$
and $0 \le t \le 1.2.$
\item[(2c)]
The curve given in Equation~\eqref{eq:xyt} for
$h_{11}=1,h_{21}=1.3, h_{12}=h_{22}=0, u_1=1, u_2=20$
is shown in 
Figure~\ref{fig:spiral1}
for $-0.57 \le t \le 0.885$.
\end{enumerate}

\item
Let $B$ be non-zero and nilpotent, 
i.e.
$$ B=
\left(
\begin{array}{cc}
0 &1\\
0& 0
\end{array}
\right)
$$ 
and $d=(0,d_2).$

Then
a solution is given by 
$
c(t)=
\left(d_2t^4/24+a_3 t^3/6+a_2t^2/2+a_1t+a_0, t\right)
$
for $a_0,a_1,a_2,a_3
\in \R.$
The matrices $\as$ are of the form:
\begin{equation*}
\as=
\left(
\begin{array}{cc}
1 & s^2/2\\
0 & 1
\end{array}
\right),
\end{equation*}
cf. the Proof of Proposition~\ref{pro:coA}.
Hence the one-parameter family 
$s \mapsto c_s$ is formed by
{\em shear transformations.}
The curve with parameters
$d_2=0.1,a_3=0.2,a_2=-4, a_1=-1, a_0=0$ 
for $-30\le t\le 25$ is 
shown in Figure~\ref{fig:seven}.


\item
Let $B$ be invertible with real
eigenvalue 
and
not diagonizable, i.e.
$$ B=
\left(
\begin{array}{cc}
b &1\\
0& b
\end{array}
\right)
$$
with $b \in \R, b\not=0$ and $d=0.$
Then $c(t)=\left(x(t),y(t)\right)$ with
\begin{eqnarray*}
x(t)&=&
\left(v_1+\frac{w_2}{2b}t\right)\coanew(t)+
\left(\frac{w_1}{b}-\frac{w_2}{2b^2}+
\frac{v_2}{2}t\right)\sianew(t)\\
y(t)&=&v_2\coanew(t)+w_2\sianew(t)
\end{eqnarray*}
The matrices $\as$ are of the form
\ben
\as=
\halb
\left(
\begin{array}{cc}
1+\coanew(s)&s \cdot \sianew(s)/2\\
0&1+\coanew(s)
\end{array}
\right)\,,
\een
cf. the Proof of Proposition~\ref{pro:coA}.
Hence the one-parameter family 
$s \mapsto c_s$ is formed by 
a composition of 
{\em shear transformations} and {\em scalings.}
For the parameters $b=-1, v_1=1,v_2=-0.1,w_1=-10,w_2=1$
and $-30\le t\le 40$ the curve is shown in
Figure~\ref{fig:shear-one}.

\begin{figure}[t]
  \centering
    \begin{minipage}[b]{6 cm}
    \includegraphics[width=0.9\textwidth]{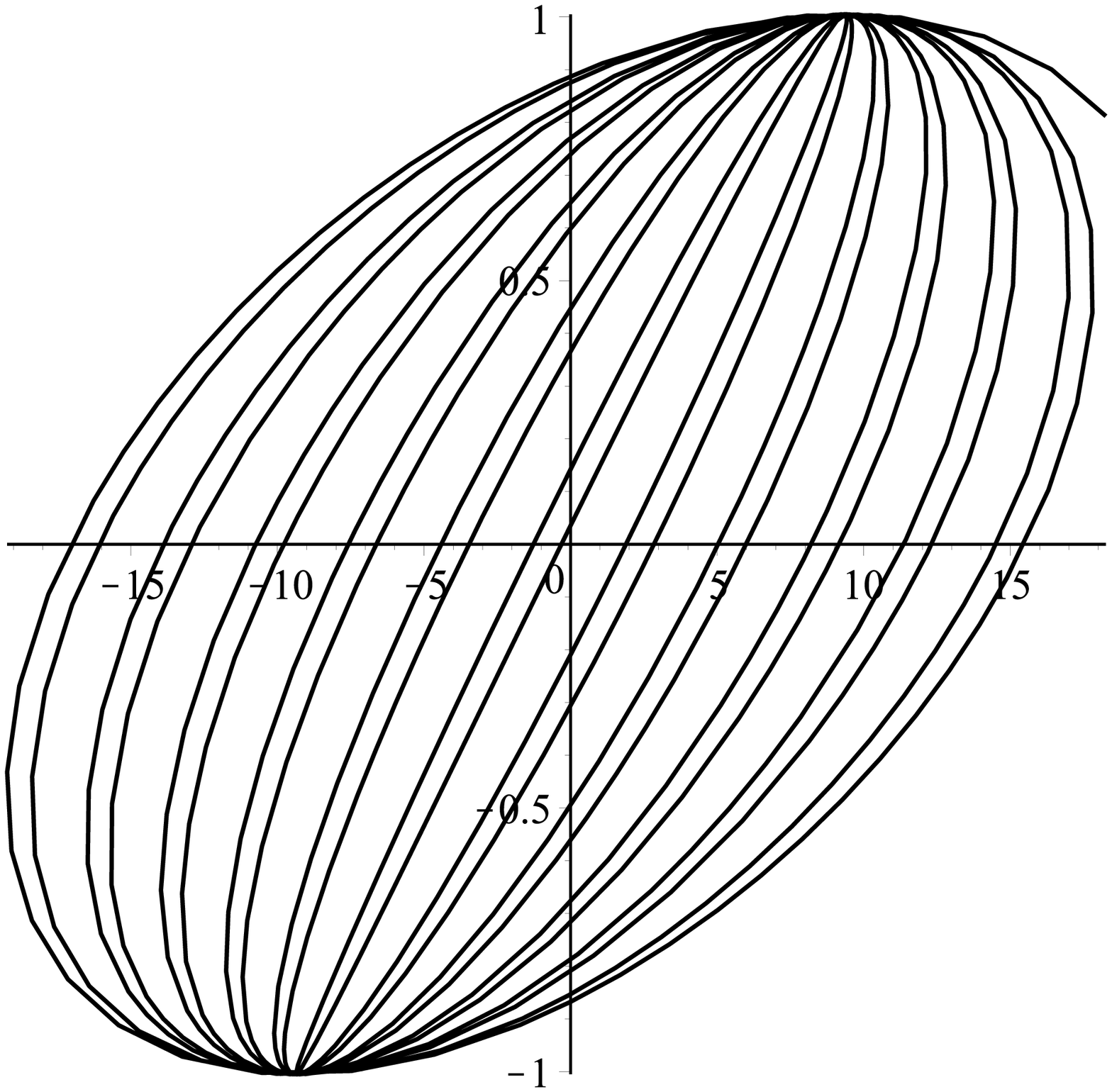} 
    \caption{Shear and Scaling, Case (4)}
    \label{fig:shear-one}
  \end{minipage}
  \begin{minipage}[b]{6 cm}
      \includegraphics[width=0.9\textwidth]{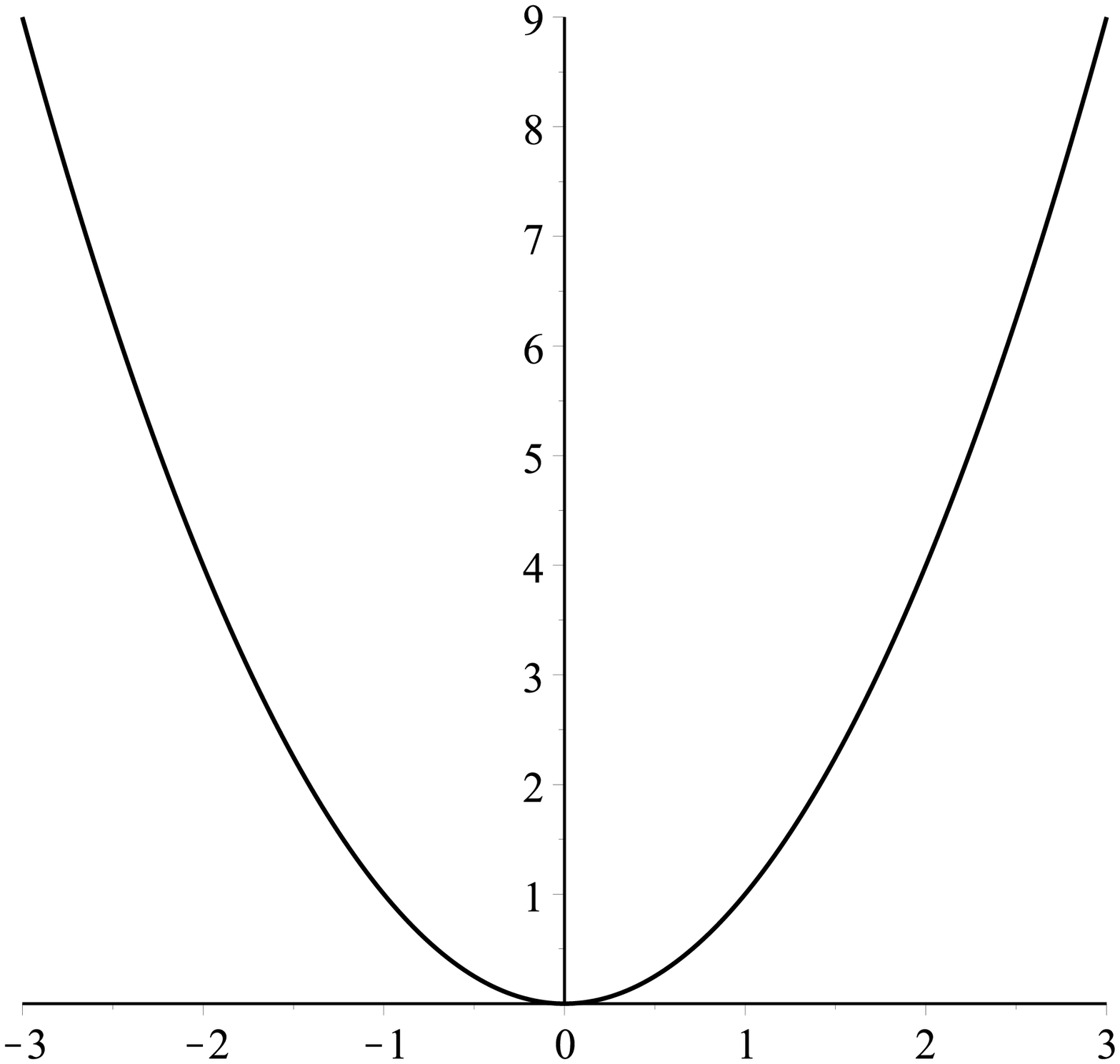}
\caption{Translation, Case (5)}
     \label{fig:parabola}
    \end{minipage} 
    \end{figure}

\item 
If $B=0$ and $d \not=0,$ then
we obtain (up to an affine
transformations) the parabola 
$c(t)=(t,t^2)$ as translation-invariant curve,
cf. Proposition~\ref{pro:odeb}(c) and 
Figure~\ref{fig:parabola}.
\begin{figure} 
\includegraphics[width=0.4\textwidth]{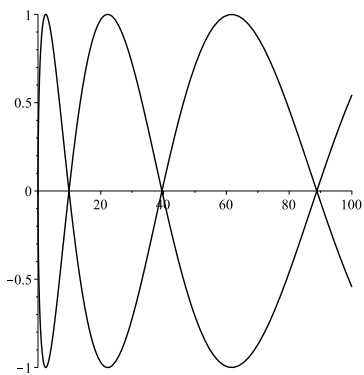} 
\caption{Translation and Scaling, Case (6)}
\label{fig:shear-two}
\end{figure}
\item
If $B$ is of the form
$$ B=
\left(
\begin{array}{cc}
0 &0\\
0& b
\end{array}
\right)
$$
with non-zero $b$ and $b=(2,0).$
Then $\ddot{x}_1(t)=2$ and 
$\ddot{x}_2(t)=b x_2(t).$
Then
\ben
c(t)=\left(t^2,v_1\cos_{b}(t)+
w_1 \sin_{b}(t)\right)
\een
Examples are (for $b=\pm \lambda^2:$)
\begin{eqnarray*}
c(t)=\left(t^2, \cos(\lambda t)\right),
c(t)=\left(t^2,\exp(\lambda t)\right),
c(t)=\left(t^2,\cosh(\lambda t)\right)\,.
\end{eqnarray*}
In Figure~\ref{fig:shear-two} the curve
$c(t)=(t^2,\sin t), -10\le t\le 10$ is shown.
\end{enumerate}
\bibliography{Discrete}
\bibliographystyle{plain}
\end{document}